\newcommand{\email}[1]{\ttfamily #1}
\renewcommand{\leq}{\ensuremath{\leqslant}}
\renewcommand{\geq}{\ensuremath{\geqslant}}
\newcommand{\Frac}[2]{\displaystyle{\frac{#1}{#2}}} 
\newcommand{\scal}[2]{{\left\langle{{#1}\mid{#2}}\right\rangle}}
\newcommand{\menge}[2]{\big\{{#1}~\big |~{#2}\big\}} 
\newcommand{\HH}{\ensuremath{{\mathcal H}}}
\newcommand{\emp}{\ensuremath{{\varnothing}}}
\newcommand{\Id}{\ensuremath{\operatorname{Id}}}
\newcommand{\RR}{\ensuremath{\mathbb{R}}}
\newcommand{\RP}{\ensuremath{\left[0,+\infty\right[}}
\newcommand{\RPP}{\ensuremath{\left]0,+\infty\right[}}
\newcommand{\NN}{\ensuremath{\mathbb N}}
\newcommand{\weakly}{\ensuremath{\:\rightharpoonup\:}}
\newcommand{\exi}{\ensuremath{\exists\,}}
\newcommand{\pinf}{\ensuremath{{+\infty}}}
\newcommand{\Fix}{\ensuremath{\operatorname{Fix}}}
\newcommand{\dom}{\ensuremath{\operatorname{dom}}}
\newcommand{\Argmin}{\ensuremath{\operatorname{Argmin}}}
\newcommand{\zeroun}{\ensuremath{\left]0,1\right[}}   
\newcommand{\zeroeta}{\ensuremath{\left]0,\eta\right[}}   
\newcommand{\rzeroun}{\ensuremath{\left]0,1\right]}}   
\newcommand{\xe}{x^\varepsilon}
\newtheorem{theorem}{Theorem}[section]
\newtheorem{lemma}[theorem]{Lemma}
\newtheorem{corollary}[theorem]{Corollary}
\newtheorem{proposition}[theorem]{Proposition}
\theoremstyle{plain}{\theorembodyfont{\rmfamily}%
\newtheorem{assumption}[theorem]{Assumption}}
\theoremstyle{plain}{\theorembodyfont{\rmfamily}%
}
\theoremstyle{plain}{\theorembodyfont{\rmfamily}%
\newtheorem{example}[theorem]{Example}}
\theoremstyle{plain}{\theorembodyfont{\rmfamily}%
\newtheorem{remark}[theorem]{Remark}}
\theoremstyle{plain}{\theorembodyfont{\rmfamily}%
\newtheorem{definition}[theorem]{Definition}}
\theoremstyle{plain}{\theorembodyfont{\rmfamily}%
}
\numberwithin{equation}{section}
\begin{document}
\title{\sffamily Asymptotic behavior of compositions of 
under-relaxed nonexpansive operators}
\author{J.-B. Baillon,$^1$ P. L. Combettes,$^{2}$ and 
R. Cominetti$^3$
\\[5mm]
\small
$\!^1$Universit\'e Paris 1 Panth\'eon-Sorbonne\\
\small SAMM -- EA 4543\\
\small 75013 Paris, France 
(\email{Jean-Bernard.Baillon@univ-paris1.fr})\\[4mm]
\small $\!^2$UPMC Universit\'e Paris 06\\
\small Laboratoire Jacques-Louis Lions -- UMR 7598\\
\small 75005 Paris, France (\email{plc@math.jussieu.fr})
\\[5mm]
\small
\small $\!^3$Universidad de Chile\\
\small Departamento de Ingenier\'{\i}a Industrial\\
\small Santiago, Chile (\email{rccc@dii.uchile.cl})
}
\date{~}
\maketitle

\vskip 8mm

\begin{abstract} 
\noindent
In general there exists no relationship between the fixed point sets 
of the composition and of the average of a family of nonexpansive 
operators in Hilbert spaces. In this paper, we establish an 
asymptotic principle connecting the cycles generated by 
under-relaxed compositions of nonexpansive operators to the fixed 
points of the average of these operators. 
In the special case when the operators are projectors onto 
closed convex sets, we prove a conjecture by De Pierro which  
has so far been established only for projections onto 
affine subspaces.
\end{abstract} 

{\bfseries Keywords.} 
Cyclic projections, 
De Pierro's conjecture,
fixed point, 
nonexpansive operator, 
projection operator, 
under-relaxed cycles.

{\bfseries 2010 Mathematics Subject Classification.}
47H09, 47H10, 47N10, 65K15

\newpage

\section{Introduction}

Fixed points of compositions and averages of nonexpansive operators 
arise naturally in diverse settings; see for instance 
\cite{Banf11,Livre1,Byrn08,Cegi12} and the
references therein. In general there is no simple relationship
between the fixed point sets of such operators. In this paper we
investigate the connection of the fixed points of the average
operator with the limits of a family of under-relaxed compositions.
More precisely, we consider the framework described in the following
standing assumption.

\begin{assumption}
\em
\label{h:1}
$\HH$ is a real Hilbert space, $D$ is a nonempty, closed,
convex subset of $\HH$, $m\geq 2$ is an integer, 
$I=\{1,\ldots,m\}$, $(T_i)_{i\in I}$ is a family of nonexpansive 
operators from $D$ to $D$, and $(\Fix T_i)_{i\in I}$ is
their fixed point sets. Moreover, we set
\begin{equation}
\label{e:Reps}
\begin{cases}
T=\Frac{1}{m}\sum_{i\in I}T_i\\
R=T_m\circ\cdots\circ T_1\\
(\forall\varepsilon\in\zeroun)\;\;
R^{\varepsilon}=
\big(\Id+\varepsilon(T_m-\Id)\big)\circ\cdots\circ
\big(\Id+\varepsilon(T_{1}-\Id)\big).
\end{cases}
\end{equation}
\end{assumption}

When the operators $(T_i)_{i\in I}$
have common fixed points, 
$\Fix T=\bigcap_{i=1}^m\Fix T_i\neq\emp$ 
\cite[Proposition~4.32]{Livre1}. If, in addition, they 
are strictly nonexpansive in the sense that
\begin{equation}
\label{e:2012-11-20b}
(\forall i\in I)(\forall x\in D\smallsetminus\Fix T_i)
(\forall y\in\Fix T_i)\quad\|T_ix-y\|<\|x-y\|,
\end{equation}
it also holds that $\Fix R=\bigcap_{i\in I}\Fix T_i$
\cite[Corollary~4.36]{Livre1}, and therefore $\Fix R=\Fix T$. 
However, in the general case when $\bigcap_{i\in I}\Fix T_i=\emp$, 
the question has been long standing and remains open
even for convex projection operators, e.g., 
\cite[Section~8.3.2]{Byrn08} and \cite{Depi01}.
Even when $m=2$ and $T_1$ and $T_2$ are resolvents of maximally 
monotone operators, there does not seem to exist a simple 
relationship between $\Fix R$ and $\Fix T$ \cite{Wang11}, except 
for convex projection operators, in which case
$\Fix T=(1/2)(\Fix R+\Fix R')$, with $R'=T_1\circ T_2$
(see \cite{Baus93,Sign94} for related results, and \cite{Baus12} for 
the case of $m\geq 3$ resolvents). 

When $(T_i)_{i\in I}=(P_i)_{i\in I}$ are 
projection operators onto nonempty closed convex sets 
$(C_i)_{i\in I}$, $\Fix T$ is the set of minimizers of the 
average square-distance function \cite{Baus93,Sign94,Depi85}
\begin{equation}
\label{e:1994}
\Phi\colon\HH\to\RR\colon x\mapsto\frac{1}{2m} 
\sum_{i\in I}d_{C_i}^2(x),
\end{equation}
while $\Fix R$ is related to the set of Nash equilibria of a 
cyclic projection game. Indeed, the fixed point
equation $x=Rx$ can be restated as a system of equations in
$(x_1,\ldots,x_m)\in\HH^m$, namely
\begin{equation}
\label{e:2010-12-21cc}
\begin{cases}
x_1&=P_1x_m\\
x_2&=P_2x_1\\
&~\vdots\\
x_m&=P_mx_{m-1},
\end{cases}
\end{equation}
which characterize the Nash equilibria of a game in which each 
player $i\in I$ selects a strategy $x_i\in C_i$ to minimize the 
payoff $x\mapsto\|x-x_{i-1}\|$, with the convention 
$x_{0}=x_m$. 
It is worth noting that, for $m\geq 3$, these Nash equilibria
cannot be characterized as minimizers of any function 
$\Psi\colon\HH^m\to\RR$ over $C_1\times\cdots\times C_m$ 
\cite{Bail12}, which further reinforces the lack of hope for 
simple connections between $\Fix R$ and $\Fix T$. It was shown in 
\cite{Gubi67} that, if one of the sets is bounded, for every 
$y_0\in\HH$, the sequence 
$(y_{km+1},\ldots,y_{km+m})_{k\in\NN}$ generated 
by the periodic best-response dynamics
\begin{equation}
\label{e:pocs}
(\forall k\in\NN)\quad 
\begin{array}{l}
\left\lfloor
\begin{array}{ll}
y_{km+1}&=P_1y_{km}\\
y_{km+2}&=P_2y_{km+1}\\
&\;\vdots\\
y_{km+m}&=P_my_{km+m-1},
\end{array}
\right.\\[2mm]
\end{array}
\end{equation}
converges weakly to a solution $(x_1,\ldots,x_m)$ 
to \eqref{e:2010-12-21cc} (see Fig.~\ref{fig:1}).
\begin{figure}[h!tb]
\begin{center}
\scalebox{0.5}{
\begin{pspicture}(4,-4.85)(18.24,6.1)
\definecolor{color96b}{rgb}{0.80,1.0,1.0}
\rput{18.0}(-0.18953674,-2.8316424)%
{\psellipse[linewidth=0.06,dimen=outer,fillstyle=solid,%
fillcolor=color96b](8.844375,-2.0141652)(5.0,1.5)}
\psellipse[linewidth=0.06,dimen=outer,fillstyle=solid,%
fillcolor=color96b](11.444375,3.185835)(5.8,1.8)
\psellipse[linewidth=0.06,dimen=outer,fillstyle=solid,%
fillcolor=color96b](18.4,-2.18)(2.87,1.99)
\psline[linewidth=0.06cm,linestyle=dashed,linecolor=black,%
arrowsize=0.18cm 2.0,arrowlength=1.4,arrowinset=0.4]{->}%
(2.214375,0.20583488)(5.7243,2.926)
\psline[linewidth=0.06cm,linestyle=dashed,linecolor=black,%
arrowsize=0.18cm 2.0,arrowlength=1.4,arrowinset=0.4]{->}%
(5.7243,2.926)(7.304375,-0.994165)
\psline[linewidth=0.06cm,linestyle=dashed,linecolor=black,%
arrowsize=0.18cm 2.0,arrowlength=1.4,%
arrowinset=0.4]{->}(7.304375,-0.994165)(15.59,-1.9)
\psline[linewidth=0.06cm,linestyle=dashed,linecolor=black,%
arrowsize=0.18cm 2.0,arrowlength=1.4,%
arrowinset=0.4]{->}(15.59,-1.9)(14.70,1.74)
\psline[linewidth=0.06cm,linestyle=dashed,linecolor=black,%
arrowsize=0.18cm 2.0,arrowlength=1.4,%
arrowinset=0.4]{->}(14.70,1.74)(13.25,-0.11)
\psline[linewidth=0.06cm,arrowsize=0.18cm 2.0,arrowlength=1.4,%
arrowinset=0.4]{->}(15.1,1.83)(13.54,-0.29)
\psline[linewidth=0.06cm,arrowsize=0.18cm 2.0,arrowlength=1.4,%
arrowinset=0.4]{->}(13.5,-0.36)(15.75,-1.36)
\psline[linewidth=0.06cm,arrowsize=0.18cm 2.0,arrowlength=1.4,%
arrowinset=0.4]{->}(15.8,-1.36)(15.1,1.78)
\psdots[dotsize=0.18](2.24,0.22)
\rput(8.0,-2.8){\LARGE $C_2$}
\rput(11.2,3.6){\LARGE $C_1$}
\rput(18.8,-2.6){\LARGE $C_3$}
\rput(2.22,-0.20){\LARGE $y_0$}
\rput(7.5,-1.4){\LARGE $y_2$}
\rput(6.2,3.1){\LARGE $y_1$}
\rput(16.0,-2.2){\LARGE $y_3$}
\rput(14.3,2.04){\LARGE $y_4$}
\rput(12.9,0.3){\LARGE $y_5$}
\rput(13.1,-0.7){\LARGE ${x}_2$}
\rput(15.4,2.37){\LARGE ${x}_1$}
\rput(16.4,-1.33){\LARGE ${x}_3$}
\psdots[dotsize=0.18](15.1,1.83)
\psdots[dotsize=0.18](13.5,-0.36)
\psdots[dotsize=0.18](15.8,-1.36)
\end{pspicture} 
}
\caption{Cycles in the method of periodic projections.}
\end{center}
\label{fig:1}
\end{figure}
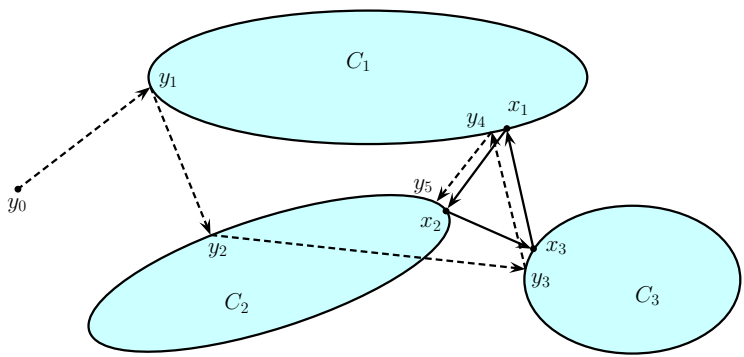
Working in a similar direction, and motivated by the work of 
\cite{Cens83} on under-relaxed projection methods for solving
inconsistent systems of affine inequalities, De Pierro considered
in \cite{Depi01} an under-relaxed version of \eqref{e:pocs},
namely 
\begin{equation}
\label{e:2012-04-09g}
(\forall k\in\NN)\quad 
\begin{array}{l}
\left\lfloor
\begin{array}{ll}
y^\varepsilon_{km+1}&=
\big(\Id+\varepsilon(P_1-\Id)\big)y^\varepsilon_{km}\\
y^\varepsilon_{km+2}&=
\big(\Id+\varepsilon(P_2-\Id)\big)
y^\varepsilon_{km+1}\\
&\;\vdots\\
y^\varepsilon_{km+m}&=
\big(\Id+\varepsilon(P_m-\Id)\big)
y^\varepsilon_{km+m-1}.
\end{array}
\right.\\[2mm]
\end{array}
\end{equation}
Under mild conditions the resulting sequence 
$(y^\varepsilon_{km+1},y^\varepsilon_{km+2},\ldots,%
y^\varepsilon_{km+m})_{k\in\NN}$ converges weakly 
to a limit cycle that satisfies the coupled equations
\begin{equation}
\label{e:pocs_limit}
(\forall\varepsilon\in\zeroun)\quad
\begin{cases}
x_1^\varepsilon&=
\big(\Id+\varepsilon(P_1-\Id)\big)x_{m}^\varepsilon\\
x_2^\varepsilon&=
\big(\Id+\varepsilon(P_2-\Id)\big)x_{1}^\varepsilon\\
&~\vdots\\       
x_m^\varepsilon&=
\big(\Id+\varepsilon(P_m-\Id)\big)x_{m-1}^\varepsilon.
\end{cases}
\end{equation}
In \cite[Conjecture~I]{Depi01}, De Pierro conjectured that as 
$\varepsilon\to 0$ these limit cycles
$(\xe_1,\ldots,\xe_m)_{\varepsilon\in\zeroun}$ shrink towards a
single point which is a minimizer of $\Phi$, i.e., a fixed
point of $T$. In contrast with \eqref{e:2010-12-21cc}, the
solutions of which do not satisfy any optimality criteria, this
conjecture suggests an asymptotic variational principle for the
cycles obtained as limits of the under-relaxed version of
\eqref{e:pocs}. An important contribution was made in 
\cite{Baus05}, where it was shown that De Pierro's conjecture
is true for families of closed affine subspaces which satisfy 
a certain regularity condition.

In this paper we investigate the asymptotic behavior of the
under-relaxed cycles 
\begin{equation}
\label{e:2011-12-06}
\begin{cases}
x_1^\varepsilon&=
\big(\Id+\varepsilon(T_1-\Id)\big)x_{m}^\varepsilon\\
x_2^\varepsilon&=
\big(\Id+\varepsilon(T_2-\Id)\big)x_{1}^\varepsilon\\
&~\vdots\\       
x_m^\varepsilon&=
\big(\Id+\varepsilon(T_m-\Id)\big)x_{m-1}^\varepsilon
\end{cases}
\end{equation}
as $\varepsilon\to 0$ in the general setting of Assumption~\ref{h:1}.
In Section~\ref{s2} we present a first general convergence result,
which establishes conditions under which the limits as 
$\varepsilon\to 0$ of the $m$ curves 
$(x_i^\varepsilon)_{\varepsilon\in\zeroun}$ ($i\in I$) exist and
all coincide with a fixed point of $T$. This result not only
gives conditions under which De Pierro's conjecture is true,
but also extends its scope from projection operators to
arbitrary nonexpansive operators.
In Section~\ref{s3} we revisit the problem from a 
constructive angle. Given an initial point $y_0\in D$
and $\varepsilon\in\zeroun$, it is known
\cite[Theorem~5.22]{Livre1} that the cycles in 
\eqref{e:2011-12-06} can be constructed iteratively
as the weak limit of the periodic process
\begin{equation}
\label{e:2012-04-09a}
(\forall k\in\NN)\quad 
\begin{array}{l}
\left\lfloor
\begin{array}{ll}
y^\varepsilon_{km+1}&=
\big(\Id+\varepsilon(T_1-\Id)\big)y^\varepsilon_{km}\\
y^\varepsilon_{km+2}&=
\big(\Id+\varepsilon(T_2-\Id)\big)
y^\varepsilon_{km+1}\\
&\;\vdots\\
y^\varepsilon_{km+m}&=
\big(\Id+\varepsilon(T_m-\Id)\big)
y^\varepsilon_{km+m-1}.
\end{array}
\right.\\[2mm]
\end{array}
\end{equation}
We analyze the connection between this iterative process
and the trajectories of the evolution equation
\begin{equation}
\label{e:2012-04-09b}
\begin{cases}
x'(t)+x(t)=Tx(t)\;\,\text{on}\;\RPP\\
x(0)=y_0,
\end{cases}
\end{equation}
and then establish extended versions of De Pierro's conjecture 
under various assumptions.  

\noindent
{\bfseries Notation.}
The scalar product of $\HH$ is denoted by $\scal{\cdot}{\cdot}$ 
and the associated norm by $\|\cdot\|$.
The symbols $\weakly$ and $\to$ denote, respectively, weak and 
strong convergence, and $\Id$ denotes the identity operator. 
The closed ball of center $x\in\HH$ and radius $\rho\in\RPP$
is denoted by $B(x;\rho)$. Given a nonempty closed convex 
subset  $C\subset \HH$, the distance function to $C$ and the
projection operator onto $C$ are respectively denoted by $d_C$ and
$P_C$.

\section{Convergence of general families of under-relaxed cycles}
\label{s2}

We investigate the asymptotic behavior of the cycles 
$(x_i^\varepsilon)_{i\in I}$ defined by \eqref{e:2011-12-06} 
when $\varepsilon\to 0$. Let us remark that such a cycle
$(x_i^\varepsilon)_{i\in I}$ is in bijection with the fixed points 
of the composition $R^\varepsilon$ of \eqref{e:Reps}.
Indeed, $z^\varepsilon=x_m^\varepsilon$ is a fixed point of 
$R^\varepsilon$; conversely, each 
$z^\varepsilon\in\Fix R^\varepsilon$ generates a cycle by 
setting, for every $i\in I$, 
$x_i^\varepsilon=(\Id+\varepsilon(T_i-\Id))x_{i-1}^\varepsilon$,
where $x_{0}^\varepsilon=z^\varepsilon$.
This motivates our second standing assumption.

\begin{assumption}
\label{h:2}
For every
$\varepsilon\in\zeroun$, $R^\varepsilon$ is given by 
\eqref{e:Reps} and 
\begin{equation}
\label{e:H}
(\exi\eta\in\left]0,1\right])(\exi\beta\in\RPP)
(\forall\varepsilon\in\left]0,\eta\right[)
(\exi z^{\varepsilon}\in\Fix R^{\varepsilon})\quad
\|z^{\varepsilon}\|\leq\beta.
\end{equation}
\end{assumption}

For later reference, we record the fact that under this assumption
the cycles in \eqref{e:2011-12-06} can be obtained as weak limits
of the iterative process \eqref{e:2012-04-09a}.

\begin{proposition}
\label{p:2013-04-22}
Suppose that Assumptions~\ref{h:1} and \ref{h:2} are satisfied.
Let $y_0\in D$ and $\varepsilon\in\zeroeta$. Then the sequence 
$(y^\varepsilon_{km+1},\ldots,y^\varepsilon_{km+m})_{k\in\NN}$ 
produced by \eqref{e:2012-04-09a} converges weakly to an $m$-tuple
$(x^\varepsilon_{1},\ldots,x^\varepsilon_{m})$ which
satisfies \eqref{e:2011-12-06}.
\end{proposition}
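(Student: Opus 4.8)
The plan is to recognize \eqref{e:2012-04-09a} as a Krasnosel'ski\u\i--Mann-type iteration and to apply the cited convergence theory for compositions of averaged operators. First I would observe that each block map $U^\varepsilon_i=\Id+\varepsilon(T_i-\Id)$ is, for $\varepsilon\in\zeroun$, an $\varepsilon$-averaged nonexpansive operator from $D$ to $D$ (since $T_i$ is nonexpansive and $D$ is convex, the convex combination stays in $D$). Consequently $R^\varepsilon=U^\varepsilon_m\circ\cdots\circ U^\varepsilon_1$ is itself averaged as a composition of averaged operators, and the iteration \eqref{e:2012-04-09a} is precisely the orbit $(R^\varepsilon)^k y_0$ read blockwise, i.e. $y^\varepsilon_{km}=(R^\varepsilon)^k y_0$ and the intermediate points $y^\varepsilon_{km+i}=U^\varepsilon_i\circ\cdots\circ U^\varepsilon_1\,(R^\varepsilon)^k y_0$.

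The key step is to verify the hypotheses of \cite[Theorem~5.22]{Livre1}, which guarantees weak convergence of the iterates of an averaged operator to a fixed point, provided $\Fix R^\varepsilon\neq\emp$. Here Assumption~\ref{h:2} supplies exactly this: for $\varepsilon\in\zeroeta$ there exists $z^\varepsilon\in\Fix R^\varepsilon$, so the fixed point set is nonempty. Thus I would invoke that theorem to conclude that $(R^\varepsilon)^k y_0\weakly \bar x^\varepsilon_m$ for some $\bar x^\varepsilon_m\in\Fix R^\varepsilon$. The next step is to propagate this weak convergence through the continuous affine maps $U^\varepsilon_i$: since each $U^\varepsilon_i$ is Lipschitz (nonexpansive, in fact) and affine-in-structure, it is weakly sequentially continuous, so $y^\varepsilon_{km+i}=U^\varepsilon_i\circ\cdots\circ U^\varepsilon_1\, y^\varepsilon_{km}\weakly x^\varepsilon_i$ where $x^\varepsilon_i=U^\varepsilon_i\circ\cdots\circ U^\varepsilon_1\,\bar x^\varepsilon_m$.

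Finally I would check that the resulting $m$-tuple $(x^\varepsilon_1,\ldots,x^\varepsilon_m)$ solves \eqref{e:2011-12-06}. Setting $x^\varepsilon_0=\bar x^\varepsilon_m$, the definition gives $x^\varepsilon_i=U^\varepsilon_i x^\varepsilon_{i-1}$ for $i\in I$, which is the bulk of the system; the one remaining identity is the cyclic closure $x^\varepsilon_m=\bar x^\varepsilon_m=x^\varepsilon_0$, and this follows immediately from $\bar x^\varepsilon_m\in\Fix R^\varepsilon$, since $R^\varepsilon \bar x^\varepsilon_m=U^\varepsilon_m\circ\cdots\circ U^\varepsilon_1\,\bar x^\varepsilon_m=x^\varepsilon_m$, whence $x^\varepsilon_m=\bar x^\varepsilon_m$. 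The main obstacle I anticipate is not in any single step but in the careful bookkeeping needed to justify that weak convergence of the full-cycle subsequence $(y^\varepsilon_{km})_k$ transfers cleanly to each intermediate index $km+i$ and that the weak limits assemble into a genuine solution of \eqref{e:2011-12-06}; the weak sequential continuity of the nonexpansive maps $U^\varepsilon_i$ (which holds because weakly convergent sequences under a nonexpansive operator converge weakly to the image of the limit when that image is identified via the demiclosedness already embedded in the cited theorem, or more simply because the $U^\varepsilon_i$ are affine perturbations of $\Id$) is the technical point requiring the most care.
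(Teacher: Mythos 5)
Your overall route is the same as the paper's, which simply cites \cite[Theorem~5.22]{Livre1}: you correctly identify each block $U^\varepsilon_i=\Id+\varepsilon(T_i-\Id)$ as an $\varepsilon$-averaged nonexpansive self-map of $D$ and correctly observe that Assumption~\ref{h:2} supplies the needed nonempty fixed point set $\Fix R^\varepsilon$. That is exactly the hypothesis check the one-line proof in the paper leaves implicit, and up to that point your argument is fine. Note, though, that the cited theorem is not merely the Krasnosel'ski\u\i--Mann statement for a single averaged operator: it is precisely the result on periodic compositions of averaged operators, and it already delivers the weak convergence of \emph{all} the intermediate subsequences $(y^\varepsilon_{km+i})_{k\in\NN}$ to a cycle satisfying \eqref{e:2011-12-06}. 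So the ``propagation'' step you add is not needed if the theorem is read correctly.

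The genuine problem is that the propagation step, as you wrote it, is false. You claim the maps $U^\varepsilon_i$ are weakly sequentially continuous because they are ``affine perturbations of $\Id$''. They are convex combinations of $\Id$ and $T_i$, but $T_i$ itself need not be affine, and nonexpansive operators --- in particular projections onto non-affine closed convex sets, which are the paper's main case --- are \emph{not} weakly sequentially continuous in infinite dimensions. Hence from $y^\varepsilon_{km}\weakly \bar x^\varepsilon_m$ you cannot conclude $U^\varepsilon_1 y^\varepsilon_{km}\weakly U^\varepsilon_1\bar x^\varepsilon_m$. The correct mechanism (and the one underlying the cited theorem) is different: averagedness yields, by telescoping the standard inequality $\|Ux-Uy\|^2\leq\|x-y\|^2-\frac{1-\alpha}{\alpha}\|(\Id-U)x-(\Id-U)y\|^2$ against a fixed cycle generated by some $z\in\Fix R^\varepsilon$, the \emph{strong} convergence of the increments $y^\varepsilon_{km+i}-y^\varepsilon_{km+i-1}$; combining this with $y^\varepsilon_{km+i-1}\weakly x^\varepsilon_{i-1}$ and the demiclosedness principle \cite[Corollary~4.18]{Livre1} applied to $\Id-U^\varepsilon_i$ gives $x^\varepsilon_i=U^\varepsilon_i x^\varepsilon_{i-1}$, and in particular the weak limit of the intermediate sequence need not a priori equal $U^\varepsilon_i\circ\cdots\circ U^\varepsilon_1\,\bar x^\varepsilon_m$ by continuity --- it equals it because the cycle relations force it. Either delete the propagation step and cite the composition theorem in full, or replace weak continuity by the demiclosedness argument.
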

\begin{proof}
This follows from \cite[Theorem~5.22]{Livre1}.
\end{proof}

The following result provides sufficient conditions for 
Assumption~\ref{h:2} to hold.
\begin{proposition} 
\label{p:zenzile}
Suppose that Assumption~\ref{h:1} holds, together with one 
of the following.
\begin{enumerate}
\item 
\label{p:zenzilei}
For some $j\in I$, $T_j$ has bounded range.
\item
\label{p:zenzileii}
$D$ is bounded.
\end{enumerate}
Then Assumption~\ref{h:2} is satisfied.
\end{proposition}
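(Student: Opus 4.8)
The plan is to deduce both cases from a single observation: for every $\varepsilon\in\zeroun$ the operator $R^\varepsilon$ is a nonexpansive self-map of $D$, so it suffices to exhibit, for each such $\varepsilon$, a nonempty bounded closed convex subset of $D$ that $R^\varepsilon$ maps into itself; the classical fixed point theorem for nonexpansive self-maps of such sets (Browder--G\"ohde--Kirk) then yields a point $z^\varepsilon\in\Fix R^\varepsilon$ lying in that subset, whose norm is controlled by the radius of the subset. To justify the preliminary observation I would first note that each factor $\Id+\varepsilon(T_i-\Id)=(1-\varepsilon)\Id+\varepsilon T_i$ is a convex combination of $\Id$ and the nonexpansive operator $T_i$; it is therefore nonexpansive, and since $D$ is convex and $T_i$ maps $D$ into $D$, it maps $D$ into $D$. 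Consequently so does the composition $R^\varepsilon$.

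Case~\ref{p:zenzileii} is then immediate: if $D$ is bounded it is itself a nonempty bounded closed convex set invariant under $R^\varepsilon$, so $\Fix R^\varepsilon\neq\emp$ and any $z^\varepsilon\in\Fix R^\varepsilon\subset D$ satisfies $\|z^\varepsilon\|\leq\beta:=\sup_{x\in D}\|x\|<\pinf$, uniformly in $\varepsilon$; one takes $\eta=1$.

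The substance lies in case~\ref{p:zenzilei}, where $D$ may be unbounded and the invariant ball must be built out of the bounded-range hypothesis. I would fix a reference point $a\in D$ and $\rho\in\RPP$ with $T_j(D)\subset B(0;\rho)$. For $i\neq j$, nonexpansiveness gives $\|T_ix\|\leq\|x\|+c_i$ with $c_i=\|a\|+\|T_ia\|$, while for $i=j$ one has the absolute bound $\|T_jx\|\leq\rho$. Writing the cycle $x_0=x$, $x_i=(1-\varepsilon)x_{i-1}+\varepsilon T_ix_{i-1}$, I would estimate $\|x_i\|$ step by step: each factor with $i\neq j$ increases the norm by at most $\varepsilon c_i$, whereas the factor $i=j$ replaces $\|x_{j-1}\|$ by $(1-\varepsilon)\|x_{j-1}\|+\varepsilon\rho$. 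Telescoping around the whole cycle should yield $\|R^\varepsilon x\|\leq(1-\varepsilon)\|x\|+\varepsilon\beta$ with $\beta=\rho+\sum_{i\neq j}c_i$ independent of $\varepsilon$. The decisive point, and the step I expect to be the main obstacle, is precisely this cancellation: the single contraction factor $(1-\varepsilon)$ introduced at the $j$-th step must absorb the cumulative linear growth produced by the remaining $m-1$ nonexpansive factors, leaving a bound that is affine in $\|x\|$ with slope $1-\varepsilon<1$ and, crucially, uniform in $\varepsilon$.

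Once this estimate is in hand the conclusion is routine. From $\|R^\varepsilon x\|\leq(1-\varepsilon)\|x\|+\varepsilon\beta$ it follows that $\|x\|\leq\beta$ implies $\|R^\varepsilon x\|\leq\beta$, so the bounded closed convex set $D\cap B(0;\beta)$ is invariant under $R^\varepsilon$; it is nonempty since it contains $a$, because $\beta\geq c_i\geq\|a\|$ for any $i\neq j$, and such an index exists as $m\geq2$. Applying the fixed point theorem on this set produces $z^\varepsilon\in\Fix R^\varepsilon$ with $\|z^\varepsilon\|\leq\beta$ for every $\varepsilon\in\zeroun$, which establishes Assumption~\ref{h:2} with $\eta=1$.
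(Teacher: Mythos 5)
Your proof is correct and follows essentially the same route as the paper's: a telescoping estimate around the cycle in which the single step involving the bounded-range operator $T_j$ contributes the contraction factor $1-\varepsilon$, yielding $\|R^\varepsilon x\|\leq(1-\varepsilon)\|x\|+\varepsilon\beta$ with $\beta$ independent of $\varepsilon$, followed by the Browder--G\"ohde--Kirk theorem on the resulting invariant ball. The only (immaterial) differences are that you center the estimates at the origin rather than at a point $y\in D$ with $T_j(D)\subset B(y;\rho)$, and that you treat the bounded-$D$ case directly instead of viewing it as a special case of the bounded-range case.
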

\begin{proof}
It is clear that \ref{p:zenzileii} is a special case of 
\ref{p:zenzilei}. Suppose that \ref{p:zenzilei} holds.
Fix $\varepsilon\in\rzeroun$ and $y\in D$, and take 
$\rho\in\left[\max_{i\in I\smallsetminus\{j\}}
\|T_iy-y\|,\pinf\right[$ 
such that $T_j(D)\subset B(y;\rho)$.
Furthermore, let $x\in D$, set $x_0=x$, and define recursively
$x_i=(1-\varepsilon)x_{i-1}+\varepsilon T_ix_{i-1}$, 
so that $x_m=R^\varepsilon x$. Then
\begin{align}
\label{e:kj1}
(\forall i\in I\smallsetminus\{j\})\quad
\|x_i-y\|
&= \|(1-\varepsilon)(x_{i-1}-y)+\varepsilon(
T_ix_{i-1}-y)\|\nonumber\\
&\leq (1-\varepsilon)\|x_{i-1}-y\|+
\varepsilon\|T_ix_{i-1}-T_iy\|+\varepsilon\|T_iy-y\|\nonumber\\
&\leq \|x_{i-1}-y\|+\varepsilon\rho
\end{align}
and
\begin{align}
\label{e:kj2}
\|x_{j}-y\|
&\leq (1-\varepsilon)\|x_{j-1}-y\|+
\varepsilon\|T_{j}x_{j-1}-y\|\nonumber\\
&\leq (1-\varepsilon)\|x_{j-1}-y\|+\varepsilon\rho.
\end{align}
By applying inductively \eqref{e:kj1} and \eqref{e:kj2} to
majorize $\|x_m-y\|$, we obtain
\begin{equation}
\label{e:bluemask1982}
\|R^\varepsilon x-y\|=\|x_m-y\|\leq (1-\varepsilon)\|x-y\|
+\varepsilon m\rho.
\end{equation}
This implies that $R^{\varepsilon}$ maps $D\cap B(y;m\rho)$ to 
itself. Hence, the Browder--G\"ohde--Kirk theorem (see 
\cite[Theorem~4.19]{Livre1}) asserts that $R^\varepsilon$
has a fixed point in $B(y;m\rho)$. 
Moreover, if $x$ is a fixed point of $R^{\varepsilon}$,
\eqref{e:bluemask1982} gives $\|x-y\|\leq m\rho$, which shows 
that \eqref{e:H} holds with $\eta=1$ and $\beta=\|y\|+m\rho$.
\end{proof}

To illustrate Assumption~\ref{h:2}, it is instructive to consider 
the following examples.

\begin{example}
\label{ex:2013-04-06}
The following variant of the example discussed in 
\cite[Section~3]{Depi01} shows that \eqref{e:H} is a non trivial 
assumption: $\HH$ is the Euclidean plane, $m=3$,
$\alpha\in\RR$, $\beta\in\RR$, $\gamma\in\RPP$, 
$\varepsilon\in\zeroun$, and $(T_i)_{1\leq i\leq 3}$ are, 
respectively, the projection operators onto the sets
\begin{equation}
\label{e:2013-04-05}
C_1=\RR\times\{\alpha\},\quad
C_2=\RR\times\{\beta\},\quad\text{and}\quad
C_3=\menge{(\xi_1,\xi_2)\in\RPP^2}{\xi_1\xi_2\geq\gamma}.
\end{equation}
Then we have
\begin{equation}
\begin{cases}
\Fix T=\menge{(\xi_1,\xi_2)\in C_3}{\xi_2=(\alpha+\beta)/2}\\
\Fix R =\menge{(\xi_1,\xi_2)\in C_3}{\xi_2=\beta}\\
\Fix R^\varepsilon=\menge{(\xi_1,\xi_2)\in C_3}
{\xi_2=\big((1-\varepsilon)\alpha+\beta\big)/(2-\varepsilon)}.
\end{cases}
\end{equation}
Thus, depending on the values of $\alpha$ and $\beta$, we can have
$\Fix T=\Fix R\neq\emp$, $\Fix T=\Fix R=\emp$, 
$\Fix T\neq\Fix R=\emp$, $\Fix R\neq\Fix T=\emp$, or
$\emp\neq\Fix R\neq\Fix T\neq\emp$.  
Now set $\eta=1+\beta/\alpha$. Then, under the assumption that
$\alpha+\beta<0<\beta$, we have $\eta\in\zeroun$ and  
$\Fix R^\varepsilon=\emp$ if $\varepsilon\leq\eta$, while
$\Fix R^\varepsilon\neq\emp$ if $\varepsilon>\eta$.
On the other hand, under the assumption that
$\beta<0<\alpha+\beta$, $\eta\in\zeroun$ and  
$\Fix R^\varepsilon\neq\emp$ if $\varepsilon<\eta$, while
$\Fix R^\varepsilon=\emp$ if $\varepsilon\geq\eta$. 
Moreover, setting
\begin{equation}
\label{e:2013-04-17}
(\forall\varepsilon\in\zeroeta)\quad
\begin{cases}
y^\varepsilon=\bigg(\Frac{2\gamma}
{(1-\varepsilon)\alpha+\beta}+\frac{1}{\varepsilon}\,,
\Frac{(1-\varepsilon)\alpha+\beta}{2-\varepsilon}\bigg)
\in\Fix R^\varepsilon\\[4mm]
z^\varepsilon=\bigg(\Frac{(2-\varepsilon)\gamma}
{(1-\varepsilon)\alpha+\beta}\,,
\Frac{(1-\varepsilon)\alpha+\beta}{2-\varepsilon}\bigg)
\in\Fix R^\varepsilon.
\end{cases}
\end{equation}
we see that
$(y^\varepsilon)_{\varepsilon\in\zeroeta}$ is an unbounded curve, 
while $(z^\varepsilon)_{\varepsilon\in\zeroeta}$ is bounded.
\end{example}

\begin{example}
In Example~\ref{ex:2013-04-06} the sets $(\Fix T_i)_{1\leq i\leq 3}$
are nonempty, and one may ask whether this plays a role in the 
nonemptiness of $\Fix R$, $\Fix T$, or $\Fix R^\varepsilon$. To 
see that such is not the case, define $T_3$ as in 
Example~\ref{ex:2013-04-06}, and consider the modified operators 
$T_1\colon(\xi_1,\xi_2)\mapsto(\xi_1+\mu,\alpha)$ and $T_2\colon
(\xi_1,\xi_2)\mapsto(\xi_1-\mu,\beta)$, where $\mu>0$. 
Although now the nonexpansive operators $T_1$ and $T_2$ have no 
fixed points, the operators $T$, $R$, and $R^\varepsilon$ remain 
unchanged.
\end{example}

\begin{example}
\label{ex:2013-04-08b}
By considering products of sets of the form \eqref{e:2013-04-05} 
one can build an example in which $\Fix T$ is nonempty but the
sets $(\Fix R^\varepsilon)_{\varepsilon\in\zeroun}$ are empty.
More precisely, let $\HH=\ell^2(\NN)$, and let $(\alpha_n)_{n\in\NN}$, 
$(\beta_n)_{n\in\NN}$, and  
$(\gamma_n)_{n\in\NN}$ be sequences in $\ell^2(\NN)$ such that 
$({\gamma_n}/(\alpha_n+\beta_n))_{n\in\NN}\in\ell^2(\NN)$
and $(\forall n\in\NN)$ $\beta_n<0<\alpha_n+\beta_n$ 
and $\gamma_n>0$. Set
\begin{equation}
\label{e:2013-04-08}
\begin{cases}
C_1=\menge{(\xi_n)_{n\in\NN}\in\ell^2(\NN)}{(\forall n\in\NN)\;\;
\xi_{2n}=\alpha_{n}}\\
C_2=\menge{(\xi_n)_{n\in\NN}\in\ell^2(\NN)}{(\forall n\in\NN)\;\;
\xi_{2n}=\beta_{n}}\\
C_3=\menge{(\xi_n)_{n\in\NN}\in\ell^2(\NN)}
{(\forall n\in\NN)\;\;\xi_n>0\;\;\text{and}\;\;
\xi_{2n-1}\xi_{2n}\geq\gamma_{n}}.
\end{cases}
\end{equation}
Then $\Fix T\neq\emp$ but, for $\varepsilon\in\zeroun$,
we have $\Fix R^{\varepsilon}\neq\emp$  
if and only if $(\forall n\in\NN)$ 
$\varepsilon<1+\beta_{2n+1}/\alpha_{2n+1}$.
In particular if we take, for every $n\in\NN\smallsetminus\{0\}$,
$\alpha_n=(n+1)/n^2$, $\beta_n=-1/n$, and
$\gamma_n=1/n^3$, then $\Fix R^\varepsilon=\emp$ for every 
$\varepsilon\in\zeroun$.
\end{example}

\begin{example}{\rm(\cite[Example~4.1]{Baus05})}
\label{ex:2013-04-08c}
Let $m=2$, and let $T_1$ and $T_2$ be the projection operators onto
closed affine subspaces $C_1\subset\HH$ and $C_2\subset\HH$, 
respectively. If $\HH$ is finite-dimensional, the sets $\Fix R$, 
$(\Fix R^\varepsilon)_{\varepsilon\in\zeroun}$, and $\Fix T$ 
are nonempty; if $\HH$ is infinite-dimensional, there exist $C_1$ 
and $C_2$ such that these sets are all empty. However, if 
the vector subspace $(C_1-C_1)+(C_2-C_2)$ is closed, then 
$\Fix T\neq\emp$ and $(\forall\varepsilon\in\zeroun)$
$\Fix R^\varepsilon\neq\emp$.
\end{example}

The next result establishes conditions for the convergence of the 
cycles of \eqref{e:2011-12-06} when the relaxation parameter
$\varepsilon$ vanishes. 

\begin{theorem}
\label{t:1}
Suppose that Assumptions~\ref{h:1} and \ref{h:2} are satisfied. 
Then $\Fix T\neq\emp$. Now let 
$(x_m^{\varepsilon})_{\varepsilon\in\zeroeta}=
(z^{\varepsilon})_{\varepsilon\in\zeroeta}$ be the bounded
curve provided by \eqref{e:H} and denote by
$(x_1^\varepsilon,\ldots,x_m^\varepsilon)_{\varepsilon\in\zeroeta}$ 
the associated family of cycles arising from \eqref{e:2011-12-06}.
Then 
$(x_1^\varepsilon,\ldots,x_m^\varepsilon)_{\varepsilon\in\zeroeta}$ 
is bounded and each of its weak sequential cluster points
is of the form $(x,\ldots,x)$, where $x\in\Fix T$. Moreover,
\begin{equation}
\label{e:2013-03-24}
(\forall i\in I)\quad 
\lim_{\varepsilon\to 0}\|x^\varepsilon_{i}-x^\varepsilon_{i-1}\|=0,
\quad\text{where}\quad (\forall\varepsilon\in\zeroeta)
\quad x^\varepsilon_{0}=x^\varepsilon_{m}.
\end{equation}
In addition, suppose that one of the following holds.
\begin{enumerate}
\item
\label{t:1i} 
$(\forall x\in\Fix T)(\forall y\in\Fix T)$ 
$\scal{x_m^\varepsilon}{x-y}$ converges as $\varepsilon\to 0$.
\item
\label{t:1ii} 
$(\forall x\in\Fix T)$ $\|x_m^\varepsilon-x\|$ converges as 
$\varepsilon\to 0$.
\item
\label{t:1iv} 
$\Fix T$ is a singleton.
\end{enumerate}
Then there exists $\overline{x}\in\Fix T$ such that, 
for every $i\in I$, $x_i^\varepsilon\weakly\overline{x}$ as 
$\varepsilon\to 0$. Finally,
suppose that $\Id-T$ is demiregular on $\Fix T$, i.e., 
\begin{equation}
\label{e:condi2}
\big(\forall (y_k)_{k\in\NN}\in D^{\NN}\big)
\big(\forall y\in\Fix T\big)\quad
\begin{cases}
y_k\weakly y\\
y_k-Ty_k\to 0
\end{cases}
\quad\Rightarrow\quad y_k\to y.
\end{equation}
Then, for every 
$i\in I$, $x_i^\varepsilon\to\overline{x}$ as $\varepsilon\to 0$.
\end{theorem}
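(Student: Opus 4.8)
The plan is to reduce everything to the behavior of the single bounded curve $(z^\varepsilon)_{\varepsilon\in\zeroeta}=(x_m^\varepsilon)_{\varepsilon\in\zeroeta}$ supplied by \eqref{e:H}, and to exploit the fact that the cyclic relations \eqref{e:2011-12-06} force the $m$ components to collapse onto a common point as $\varepsilon\to 0$. First I would establish that the whole family of cycles is bounded. Fixing a reference point $w\in D$ and writing $x_i^\varepsilon=(1-\varepsilon)x_{i-1}^\varepsilon+\varepsilon T_i x_{i-1}^\varepsilon$, nonexpansiveness of $T_i$ gives $\|T_i x_{i-1}^\varepsilon\|\leq\|x_{i-1}^\varepsilon-w\|+\|T_iw\|$, whence $\|x_i^\varepsilon\|\leq\|x_{i-1}^\varepsilon\|+\|w\|+\|T_iw\|$ for $\varepsilon\leq 1$; starting from $\|x_0^\varepsilon\|=\|z^\varepsilon\|\leq\beta$ and iterating yields a bound $M$ independent of $\varepsilon\in\zeroeta$. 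Consequently $\|T_i x_{i-1}^\varepsilon-x_{i-1}^\varepsilon\|$ is bounded uniformly in $\varepsilon$, so \eqref{e:2011-12-06} gives $\|x_i^\varepsilon-x_{i-1}^\varepsilon\|=\varepsilon\|T_ix_{i-1}^\varepsilon-x_{i-1}^\varepsilon\|\to 0$, which is exactly \eqref{e:2013-03-24}.

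The heart of the argument is the telescoping identity obtained by summing the $m$ relations in \eqref{e:2011-12-06}: since $x_0^\varepsilon=x_m^\varepsilon$, the left-hand sides telescope to $0$, whence $\sum_{i\in I}(T_ix_{i-1}^\varepsilon-x_{i-1}^\varepsilon)=0$. Comparing this with $\sum_{i\in I}(T_iz^\varepsilon-z^\varepsilon)=m(Tz^\varepsilon-z^\varepsilon)$ and bounding each difference $\|(T_iz^\varepsilon-z^\varepsilon)-(T_ix_{i-1}^\varepsilon-x_{i-1}^\varepsilon)\|\leq 2\|z^\varepsilon-x_{i-1}^\varepsilon\|$ by nonexpansiveness, I would deduce $\|Tz^\varepsilon-z^\varepsilon\|\leq(2/m)\sum_{i\in I}\|z^\varepsilon-x_{i-1}^\varepsilon\|$. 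Since $\|z^\varepsilon-x_{i-1}^\varepsilon\|\leq\sum_{j<i}\|x_j^\varepsilon-x_{j-1}^\varepsilon\|\to 0$, this forces $(\Id-T)z^\varepsilon\to 0$. Now $T$ is an average of nonexpansive operators, hence nonexpansive, so $\Id-T$ is demiclosed at $0$ \cite{Livre1}. Given any weak sequential cluster point of the cycles along $\varepsilon_k\to 0$, the vanishing of consecutive differences makes all $m$ components share a common weak limit $x$, and since $z^{\varepsilon_k}\weakly x$ with $(\Id-T)z^{\varepsilon_k}\to 0$, demiclosedness yields $x\in\Fix T$. Boundedness guarantees such cluster points exist, so in particular $\Fix T\neq\emp$.

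For the weak convergence under \ref{t:1i}, \ref{t:1ii}, or \ref{t:1iv}, I would run the standard Opial-type uniqueness argument on the bounded curve $(z^\varepsilon)_{\varepsilon\in\zeroeta}$, whose weak cluster points all lie in $\Fix T$ by the previous step. Under \ref{t:1iv} there is nothing more to do. Under \ref{t:1i}, or under \ref{t:1ii} after expanding $\|z^\varepsilon-x\|^2-\|z^\varepsilon-x'\|^2$, the quantity $\scal{z^\varepsilon}{x-x'}$ converges for any two cluster points $x,x'\in\Fix T$; evaluating its limit along the two defining subsequences gives $\scal{x}{x-x'}=\scal{x'}{x-x'}$, i.e.\ $\|x-x'\|^2=0$. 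Hence the cluster point is unique and $z^\varepsilon=x_m^\varepsilon\weakly\overline{x}\in\Fix T$; since $\|x_i^\varepsilon-x_m^\varepsilon\|\to 0$, the same weak limit is shared by every $x_i^\varepsilon$.

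Finally, if $\Id-T$ is demiregular on $\Fix T$, then combining $x_m^{\varepsilon_k}\weakly\overline{x}$ with $(\Id-T)x_m^{\varepsilon_k}\to 0$ along an arbitrary sequence $\varepsilon_k\to 0$ upgrades the convergence to $x_m^{\varepsilon_k}\to\overline{x}$ via \eqref{e:condi2}, and since this holds for every such sequence, $x_m^\varepsilon\to\overline{x}$; the estimate $\|x_i^\varepsilon-x_m^\varepsilon\|\to 0$ then propagates the strong limit to every $x_i^\varepsilon$. The main obstacle is the middle step: the cycle visits $m$ distinct points under $m$ distinct operators, and the only way to see a fixed point of the \emph{average} $T$ emerge is to collapse all base points onto the common $z^\varepsilon$, paying an error controlled by \eqref{e:2013-03-24}; once $(\Id-T)z^\varepsilon\to 0$ is in hand, the demiclosedness principle and the Opial argument are routine.
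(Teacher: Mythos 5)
Your proposal is correct and follows essentially the same route as the paper's proof: bound the cycle inductively from the bounded anchor $z^\varepsilon$, use the telescoped identity $\sum_{i\in I}(T_ix_{i-1}^\varepsilon-x_{i-1}^\varepsilon)=0$ together with nonexpansiveness to get $\|Tz^\varepsilon-z^\varepsilon\|\leq(2/m)\sum_{i}\|z^\varepsilon-x_{i-1}^\varepsilon\|\to 0$, then invoke demiclosedness of $\Id-T$, the Opial-type uniqueness argument under \ref{t:1i}--\ref{t:1iv} (with \ref{t:1ii} reduced to \ref{t:1i} by polarization), and demiregularity for the strong conclusion. The only cosmetic difference is that the paper rewrites $Tx_m^\varepsilon-x_m^\varepsilon$ directly via the identity $\sum_i T_ix_{i-1}^\varepsilon=\sum_i x_i^\varepsilon$ rather than comparing the two sums term by term, which yields the same estimate.
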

\begin{proof}
Fix $z\in D$. By nonexpansiveness of the operators 
$(T_i)_{i\in I}$, we have
\begin{align}
\label{e:2012-04-08x}
(\forall i\in I)\quad
\|T_ix^\varepsilon_{i-1}-x^\varepsilon_{i-1}\|
&\leq\|T_ix^\varepsilon_{i-1}-T_iz\|+\|T_iz-z\|
+\|z-x^\varepsilon_{i-1}\|\nonumber\\
&\leq2\|x^\varepsilon_{i-1}-z\|+\|T_iz-z\|.
\end{align}
In particular, for $i=1$, it follows from the boundedness of 
$(x_m^{\varepsilon})_{\varepsilon\in\zeroeta}$ that
$(T_1x_m^{\varepsilon}-x_m^{\varepsilon})_{\varepsilon\in\zeroeta}$ 
is bounded. In turn, we deduce from \eqref{e:2011-12-06} that 
$(x_1^{\varepsilon})_{\varepsilon\in\zeroeta}$ is bounded.
Continuing this process, we obtain the boundedness of 
$(x_1^\varepsilon,\ldots,x_m^\varepsilon)_{\varepsilon\in\zeroeta}$ 
and the fact that
\begin{equation}
\label{e:2013-04-08c}
(\forall i\in I)\quad (T_ix_{i-1}^{\varepsilon}-
x_{i-1}^{\varepsilon})_{\varepsilon\in\zeroeta}
\quad\text{is bounded.}
\end{equation}
On the other hand, adding all the equalities in 
\eqref{e:2011-12-06}, we get
\begin{equation}
\label{e:2011-12-08b}
(\forall\varepsilon\in\zeroeta)\quad
\sum_{i\in I}T_ix_{i-1}^\varepsilon=
\sum_{i\in I}x_i^\varepsilon,
\end{equation}
from which it follows that
\begin{align}
\label{e:2011-12-09}
(\forall\varepsilon\in\zeroeta)\quad
Tx_m^\varepsilon-x_m^\varepsilon
&=\frac1m\sum_{i=1}^m T_ix_m^\varepsilon-x_m^\varepsilon
\nonumber\\
&=\frac1m\sum_{i=1}^m T_ix_{i-1}^\varepsilon+
\frac1m\sum_{i=2}^{m}\big(T_ix_m^\varepsilon-
T_ix_{i-1}^\varepsilon\big)-x_m^\varepsilon\nonumber\\
&=\frac1m\sum_{i=1}^m x_i^\varepsilon+
\frac1m\sum_{i=2}^{m}\big(T_ix_m^\varepsilon-
T_ix_{i-1}^\varepsilon\big)-x_m^\varepsilon\nonumber\\
&=\frac1m\sum_{i=1}^{m-1}(x_i^\varepsilon-x_m^\varepsilon)+
\frac1m\sum_{i=1}^{m-1}\big(T_{i+1}x_m^\varepsilon-
T_{i+1}x_i^\varepsilon\big).
\end{align}
Hence, using the nonexpansiveness of the operators 
$(T_i)_{i\in I}$, we obtain
\begin{align}
\label{e:2012-01-03}
(\forall\varepsilon\in\zeroeta)\quad
\|Tx_m^\varepsilon-x_m^\varepsilon\|
&\leq\frac2m\sum_{i=1}^{m-1}\big\|x_m^\varepsilon-
x_i^\varepsilon\big\|.
\end{align}
Consequently, since \eqref{e:2011-12-06} and \eqref{e:2013-04-08c} 
also imply that
\begin{equation}
\label{e:2011-12-07}
(\forall i\in I)\quad
\|x_i^\varepsilon-x_{i-1}^\varepsilon\|
=\varepsilon\|T_ix_{i-1}^\varepsilon-x_{i-1}^\varepsilon\|
\to 0\quad\text{as}\quad\varepsilon\to 0,
\end{equation}
thus proving \eqref{e:2013-03-24}, the triangle inequality gives 
$\|x_m^\varepsilon-x_i^\varepsilon\|\to 0$,
which, combined with \eqref{e:2012-01-03}, yields 
\begin{equation}
\label{e:2013}
Tx_m^\varepsilon-x_m^\varepsilon\to 0. 
\end{equation}
Hence, we can 
invoke the demiclosed principle \cite[Corollary~4.18]{Livre1} 
to deduce that every weak sequential cluster point of the 
bounded curve
$(x_m^\varepsilon)_{\varepsilon\in\zeroeta}$ belongs to $\Fix T$,
which is therefore nonempty. In view of 
\eqref{e:2011-12-07}, we therefore deduce that 
every weak sequential cluster point of 
$(x_1^\varepsilon,\ldots,x_m^\varepsilon)_{\varepsilon\in\zeroeta}$ 
is of the form $(x,\ldots,x)$, where $x\in\Fix T$.
It remains to show that under any of the conditions \ref{t:1i},
\ref{t:1ii}, or \ref{t:1iv}, the curve
$(x_m^\varepsilon)_{\varepsilon\in\zeroun}$ is weakly convergent.
Clearly \ref{t:1iv} implies \ref{t:1i}, and the same holds for
\ref{t:1ii} since
\begin{equation}
\label{e:2012-11-28}
(\forall(x,y)\in\HH^2)(\forall\varepsilon\in\zeroeta)\quad
\scal{x_m^\varepsilon}{x-y}=\frac12\big(\|x_m^\varepsilon-y\|^2
-\|x_m^\varepsilon-x\|^2+\|x\|^2-\|y\|^2\big).
\end{equation}
Thus, it suffices to show that under \ref{t:1i} the curve
$(x_m^\varepsilon)_{\varepsilon\in\zeroeta}$ has a unique 
weak sequential cluster point.
Let $x$ and $y$ be two weak sequential cluster points and
choose sequences $(\varepsilon_n)_{n\in\NN}$ and
$(\varepsilon'_n)_{n\in\NN}$ in $\zeroeta$ converging to $0$
such that $x_m^{\varepsilon_n}\weakly x$ and 
$x_m^{\varepsilon'_n}\weakly y$ as $n\to\pinf$.
As shown above, we have $x$ and $y$ lie in $\Fix T$ and, therefore,
it follows from \ref{t:1i} that $\scal{x}{x-y}=\lim_{n\to\pinf}
\scal{x_m^{\varepsilon_n}}{x-y}=\lim_{n\to\pinf}
\scal{x_m^{\varepsilon'_n}}{x-y}=\scal{y}{x-y}$. 
This yields $\|x-y\|^2=0$ proving our claim. 

Finally, let us establish the strong convergence assertion. 
To this end, let $(\varepsilon_n)_{n\in\NN}$ be a sequence
in $\zeroeta$ converging to $0$. Then, as just proved,
$x_m^{\varepsilon_n}\weakly\overline{x}\in\Fix T$ as $n\to\pinf$. 
On the other hand, \eqref{e:2013} yields 
$x_m^{\varepsilon_n}-Tx_m^{\varepsilon_n}\to 0$ as $n\to\pinf$. 
Hence, we derive from \eqref{e:condi2} that 
$x_m^{\varepsilon_n}\to\overline{x}$ as $n\to\pinf$.
This shows that $x_m^{\varepsilon}\to\overline{x}$ as 
$\varepsilon\to 0$. In view of \eqref{e:2011-12-07}, 
the proof is complete.
\end{proof}

\begin{remark}
\label{r:2013-04-23}
The demiregularity condition \eqref{e:condi2} is a specialization
of a notion introduced in \cite[Definition~2.3]{Sico10} for
set-valued operators (see also \cite[Definition~27.1]{Zeid90}). 
It follows from \cite[Proposition~2.4]{Sico10} that 
\eqref{e:condi2} is satisfied in each of the following cases. 
\begin{enumerate}
\item
\label{p:2009-09-20i}
$\Id-T$ is uniformly monotone at every $y\in\Fix T$.
\item
\label{r:2013-04-23ii}
$\Id-T$ is strongly monotone at every $y\in\Fix T$.
\item
\label{p:2009-09-20i+}
$T=\Id-\nabla f$, where $f\in\Gamma_0(\HH)$ is uniformly convex at
every $y\in\Fix T$.
\item
\label{p:2009-09-20iv}
$D$ is boundedly compact: its intersection 
with every closed ball is compact.
\item
\label{p:2009-09-20vi}
$D=\HH$ and $\Id-T$ is invertible.
\item
\label{p:2009-09-20vii}
$T$ is demicompact \cite{Petr66}: for every bounded sequence 
$(y_n)_{n\in\NN}$ in $D$ such that $(y_n-Ty_n)_{n\in\NN}$ converges
strongly, $(y_n)_{n\in\NN}$ admits a strongly convergent 
subsequence.
\end{enumerate}
\end{remark}

In the special case when $(T_i)_{i\in I}$ is a family of projection
operators onto closed convex sets, Theorem~\ref{t:1} asserts that
De Pierro's conjecture is true under any of conditions
\ref{t:1i}--\ref{t:1iv}. In particular, we obtain weak
convergence of each point in the cycle to the point in $\Fix T$ 
if this set is a singleton, which can be considered as a 
generic situation in many practical instances when 
$\bigcap_{i\in I}\Fix T_i\neq\emp$. 
The following example illustrates a degenerate case 
in which weak convergence of the cycles can fail.

\begin{example}
\label{ex:2012}
Suppose that in Theorem~\ref{t:1} we have
$\bigcap_{i\in I}\Fix T_i\neq\emp$. Then  it follows from the 
results of \cite[Section~4.5]{Livre1} that
\begin{equation}
\label{e:2012-01-30}
(\forall\varepsilon\in\zeroun)\quad
\Fix R^\varepsilon=\bigcap_{i\in I}\Fix\big((1-\varepsilon)\Id
+\varepsilon T_i\big)=
\bigcap_{i\in I}\Fix T_{i}=\Fix T.
\end{equation}
Now suppose $y$ and $z$ are two distinct points in $\Fix T$ and
set
\begin{equation}
\label{e:2012-01-30b}
(\forall\varepsilon\in\zeroun)\quad
x_{m}^{\varepsilon}=
\begin{cases}
y,&\text{if}\;\;\lfloor 1/\varepsilon \rfloor\;\text{is even};\\
z,&\text{if}\;\;\lfloor 1/\varepsilon \rfloor\;\text{is odd}.
\end{cases}
\end{equation}
Then $(x_{m}^{\varepsilon})_{\varepsilon\in\zeroun}$ has two 
distinct weak cluster points and therefore it does not converge 
weakly, although Assumptions~\ref{h:1} and \ref{h:2} are 
trivially satisfied. 
\end{example}

\section{Convergence of limit cycles of under-relaxed iterations}
\label{s3}

As illustrated in Example~\ref{ex:2012}, in general one cannot 
expect every solution cycle
$(x_1^\varepsilon,\ldots,x_m^\varepsilon)_{\varepsilon\in\zeroeta}$
in \eqref{e:2011-12-06} to converge as there are cases 
that oscillate. Theorem~\ref{t:1} provided conditions that rule out
multiple clustering and ensure the weak convergence of the 
cycles as $\varepsilon\to 0$. 
An alternative approach, inspired from \cite{Depi01}, 
is to focus on solutions of \eqref{e:2011-12-06} that arise 
as limit cycles of the under-relaxed periodic iteration
\eqref{e:2012-04-09a} started from the same initial point 
$y_0\in D$ for every $\varepsilon\in\zeroeta$. 
This arbitrary but fixed initial point is 
intended to act as an anchor that avoids multiple cluster points
of the resulting family of limit cycles
$(x_1^\varepsilon,\ldots,x_m^\varepsilon)_{\varepsilon\in\zeroeta}$.

As mentioned in the Introduction, for convex projection operators 
De Pierro conjectured that, as $\varepsilon\to 0$, the limit 
cycles shrink to a least-squares solution, 
namely $(x_1^\varepsilon,\ldots,x_m^\varepsilon)\weakly 
(\overline{x},\ldots,\overline{x})$, where $\overline{x}$ is a 
minimizer of the function $\Phi$ of \eqref{e:1994}. 
In \cite[Theorem~6.4]{Baus05} the conjecture was proved for 
closed affine subspaces satisfying a regularity conditions,
in which case the limit $\overline{x}$ exists in the strong topology 
and is in fact the point in $S=\Argmin\Phi=\Fix T$ closest to the 
initial point $y_0$, namely $\overline{x}=P_Sy_0$. 
However, for general convex sets the conjecture remains open.

We revisit this question in the general framework delineated
by Assumptions~\ref{h:1} and \ref{h:2} with a different strategy
than that adopted in Section~\ref{s2}.
Our approach consists in showing that, for
$\varepsilon$ small, the iterates \eqref{e:2012-04-09a} follow 
closely the orbit of the semigroup generated by $A=\Id-T$, 
i.e., the semigroup associated with the autonomous Cauchy problem
\begin{equation}
\label{gradflow_T}
\begin{cases}
x'(t)=-Ax(t)\;\,\text{on}\;\RPP\\
x(0)=y_0.
\end{cases}
\end{equation}
This allows us to relate the limit cycles 
$(x_1^\varepsilon,\ldots,x_m^\varepsilon)_{\varepsilon\in\zeroeta}$ 
to the limit of $x(t)$ when $t\to\pinf$. Note that, since 
$y_0\in D=\dom A$ and $A$ is Lipschitz, \eqref{gradflow_T} has 
a unique solution $x\in{\EuScript C}^1(\RPP;D)$;
see, e.g., \cite[Theorem~I.1.4]{Brez73}. In addition, if
there exists $x_\infty\in\HH$ such that $x(t)\weakly x_\infty$ as
$t\to\pinf$, then $\overline{x}\in\Fix T$.
In the case of convex projections, \eqref{gradflow_T} 
reduces to the gradient flow
\begin{equation}
\label{gradflow}
\begin{cases}
x'(t)=-\nabla\Phi(x(t))\;\,\text{on}\;\RPP\\
x(0)=y_0,
\end{cases}
\end{equation}
which converges weakly to some point $x_\infty\in S$ as
$t\to\pinf$ \cite[Theorem~4]{Bruc75}, and one may therefore 
expect De Pierro's conjecture to hold with $\overline{x}=x_\infty$
under suitable assumptions. Note, however, that for
non-affine convex sets the limit $x_\infty$ might not coincide
with the projection $P_Sy_0$.

\subsection{Under-relaxed cyclic iterations and semigroup flows}
\label{s32_T}

In order to study \eqref{e:2012-04-09a} for a fixed
$\varepsilon\in\zeroun$, it suffices to consider
the iterates modulo $m$, that is, the sequence 
$(y^\varepsilon_{km})_{k\in\NN}=((R^{\varepsilon})^ky_0)_{k\in\NN}$,
which converge weakly towards some point
$\xe_m\in\Fix R^{\varepsilon}$ (see Proposition~\ref{p:2013-04-22}).
The key to establish a formal connection between the iteration
\eqref{e:2012-04-09a} and the semigroup associated with 
\eqref{gradflow_T}, is the following approximation
lemma that relates $R^{\varepsilon}$ to $A=\Id-T$.

\begin{lemma}
\label{lema31_T}    
Set $A=\Id-T$, fix $z\in D$, and set 
$\rho=\max_{i\in I}\|T_iz-z\|/2$. Then
\begin{equation}
\label{ReA}
(\forall\varepsilon\in [0,1])(\forall x\in D)
\quad\|R^{\varepsilon} x-x+\varepsilon m Ax\|\leq
\varepsilon^2(3^m-2m-1)(\|x-z\|+\rho).
\end{equation}
\end{lemma}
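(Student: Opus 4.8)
The plan is to unfold the composition $R^\varepsilon$ into its $m$ elementary steps and compare the cumulative displacement against the single Euler step $-\varepsilon m Ax$. Writing $A_i=\Id-T_i$, each factor of $R^\varepsilon$ is $\Id-\varepsilon A_i$; setting $x_0=x$ and $x_i=x_{i-1}-\varepsilon A_i x_{i-1}$ yields $x_m=R^\varepsilon x$ together with the elementary relation $x_i-x_{i-1}=-\varepsilon A_i x_{i-1}$. Summing over $i\in I$ gives $x_m-x_0=-\varepsilon\sum_{i\in I}A_i x_{i-1}$, while $mA=\sum_{i\in I}A_i$ gives $\varepsilon mAx=\varepsilon\sum_{i\in I}A_i x_0$. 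Adding these produces the \emph{exact} identity $R^\varepsilon x-x+\varepsilon mAx=\varepsilon\sum_{i\in I}(A_i x_0-A_i x_{i-1})$, which isolates the discrepancy between evaluating each $A_i$ at the running iterate $x_{i-1}$ rather than at the common base point $x$.

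From here, since $T_i$ is nonexpansive each $A_i$ is $2$-Lipschitz, so $\|A_i x_0-A_i x_{i-1}\|\leq 2\|x_{i-1}-x_0\|$ and the error is majorized by $2\varepsilon\sum_{i\in I}\|x_{i-1}-x_0\|$ (the $i=1$ term vanishing). It therefore remains to control the intermediate displacements $\|x_{i-1}-x_0\|$, which are themselves of order $\varepsilon$: telescoping gives $\|x_{i-1}-x_0\|\leq\varepsilon\sum_{j=1}^{i-1}\|A_j x_{j-1}\|$. To estimate $\|A_j x_{j-1}\|$ I would insert the anchor $z$ and use nonexpansiveness together with the definition of $\rho$ to obtain the uniform bound $\|A_j u\|\leq 2(\|u-z\|+\rho)$, valid for every $u\in D$, since $\|T_j z-z\|\leq 2\rho$.

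The remaining ingredient is a growth estimate for $d_j:=\|x_j-z\|+\rho$. From $x_j-z=(x_{j-1}-z)-\varepsilon A_j x_{j-1}$ and the bound on $\|A_j x_{j-1}\|$ one gets $d_j\leq(1+2\varepsilon)d_{j-1}$, hence $d_j\leq 3^j d_0$ using $\varepsilon\leq 1$; this is the only place the hypothesis $\varepsilon\in[0,1]$ is invoked. Feeding $\|A_j x_{j-1}\|\leq 2d_{j-1}\leq 2\cdot 3^{j-1}d_0$ into the telescoped bound and summing the geometric series gives $\|x_{i-1}-x_0\|\leq\varepsilon(3^{i-1}-1)d_0$, and the final summation $\sum_{i=1}^m(3^{i-1}-1)=(3^m-1)/2-m$ collapses everything to $\varepsilon^2(3^m-2m-1)(\|x-z\|+\rho)$, which is exactly the claimed inequality with $d_0=\|x-z\|+\rho$.

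I do not expect a genuine conceptual obstacle, as the argument is a discrete Lie--Trotter expansion-error estimate. The only delicate point is the bookkeeping: one must carry the constant $3$ (rather than the sharper $1+2\varepsilon$) through both the displacement bound and the growth estimate, so that the two nested geometric sums telescope to precisely $3^m-2m-1$ instead of a looser majorant. Choosing the right quantity to iterate, namely the shifted norm $d_j=\|x_j-z\|+\rho$ rather than $\|x_j-z\|$ alone, is what keeps the recursion clean and absorbs the additive $\rho$ terms into a single geometric factor.
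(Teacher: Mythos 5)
Your proof is correct and is essentially the paper's argument in a different bookkeeping: the paper runs a recursion on the accumulated error operators $E^\varepsilon_j=\varepsilon^{-2}(R^\varepsilon_j-\Id)+\varepsilon^{-1}\sum_{i\leq j}(\Id-T_i)$, while you write the same discrepancy as the exact telescoping identity $R^\varepsilon x-x+\varepsilon mAx=\varepsilon\sum_{i\in I}(A_ix_0-A_ix_{i-1})$ and bound each term via the $2$-Lipschitz property of $\Id-T_i$, the anchor $z$, and the growth estimate $d_j\leq(1+2\varepsilon)d_{j-1}\leq 3^jd_0$. Both routes rest on the same ingredients and both yield exactly the constant $3^m-2m-1$, so no further comment is needed.
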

\begin{proof}
Since the case $\varepsilon=0$ is trivial, we take 
$\varepsilon\in\rzeroun$. Define operators on $D$ by
\begin{equation}
(\forall j\in I)\quad R^\varepsilon_j=
\big(\Id+\varepsilon(T_j-\Id)\big)\circ\cdots\circ
\big(\Id+\varepsilon(T_{1}-\Id)\big)
\end{equation}
and
\begin{equation}
\label{e:rc12}
(\forall j\in I)\quad E^\varepsilon_j=
\frac{1}{\varepsilon^2}(R^\varepsilon_j-\Id)
+\frac{1}{\varepsilon}\sum_{i=1}^j(\Id-T_i).
\end{equation}
Then $R^{\varepsilon}=R^\varepsilon_m$ and therefore
$R^{\varepsilon}-\Id+\varepsilon m A=\varepsilon^2
E_m^\varepsilon$. Thus, the result boils down to showing that
$(\forall x\in D)$ 
$\|E_m^\varepsilon x\|\leq(3^m-2m-1)(\|x-z\|+\rho)$.
We derive from \eqref{e:rc12} that
\begin{equation}
(\forall j\in\{1,\ldots,m-1\})\quad
E^\varepsilon_{j+1}=E^\varepsilon_j+\frac{1}{\varepsilon}
\Big((\Id-T_{j+1})-(\Id-T_{j+1})\circ R^\varepsilon_j\Big).
\end{equation}
Now let $x\in D$. Since the operators 
$(\Id-T_{j})_{1\leq j\leq m-1}$ are 2-Lipschitz, we have
\begin{align}
(\forall j\in\{1,\ldots,m-1\})\quad
\|E^\varepsilon_{j+1}x\|
&\leq\|E^\varepsilon_j x\|+\frac{2}{\varepsilon}
\|x-R^\varepsilon_jx\|\nonumber\\
&=\|E^\varepsilon_j x\|+2\bigg\|\sum_{i=1}^j(\Id-T_i)x-\varepsilon
E^\varepsilon_j x\bigg\|\nonumber\\
&\leq (1+2\varepsilon)\|E^\varepsilon_j x\|+
2\sum_{i=1}^j(\|x-z\|+\|z-T_iz\|+\|T_iz-T_ix\|)\nonumber\\
&\leq (1+2\varepsilon)\|E^\varepsilon_j x\|+4j(\|x-z\|+\rho).
\label{recur_T}
\end{align}
Using \eqref{recur_T} recursively, and observing that
$E_1^\varepsilon x=0$, it follows that
\begin{equation}
\label{cotita_T}
\|E_m^\varepsilon x\|\leq 4(\|x-z\|+\rho)\sum_{j=1}^{m-1}
j(1+2\varepsilon)^{m-1-j}.
\end{equation}
Upon applying the identity
$\sum_{j=1}^{m-1}j\alpha^j=
((m-1)\alpha^{m+1}-m\alpha^m+\alpha)/(1-\alpha)^2$
to $\alpha=(1+2\varepsilon)^{-1}\in\zeroun$, 
we see that the sum in \eqref{cotita_T} is equal to
$((1+2\varepsilon)^m-1-2m\varepsilon)/(4\varepsilon^2)$, which
increases with $\varepsilon$ attaining its maximum
$(3^m-2m-1)/4$ at $\varepsilon=1$. This combined
with \eqref{cotita_T} yields the announced bound.
\end{proof}

\begin{remark}
For firmly nonexpansive operators, such as projection operators
onto closed convex sets, the operators $(\Id-T_i)_{i\in I}$ are
nonexpansive and the previous proof can be modified to derive a
tighter bound in \eqref{ReA}, namely
\begin{equation}
\label{firm}
(\forall\varepsilon\in [0,1])(\forall x\in D)
\quad\|R^{\varepsilon} x-x+\varepsilon m Ax\|\leq
\varepsilon^2(2^m-m-1)(\|x-z\|+2\rho).
\end{equation}
\end{remark}

We proceed with the announced connection between
\eqref{e:2012-04-09a} and \eqref{gradflow_T}.
This will be used later to establish De Pierro's conjecture in
several alternative settings. 

\begin{proposition} 
\label{sd_T} 
Let $y_0\in D$, let $x$ be the solution of \eqref{gradflow_T},
suppose that Assumptions~\ref{h:1} and \ref{h:2} are satisfied. 
For every $\varepsilon\in\zeroeta$, set
$(z_k^\varepsilon)_{k\in\NN}=((R^{\varepsilon})^ky_0)_{k\in\NN}$ 
and let $\psi^{\varepsilon}$ be the linear
interpolation of $(z^{\varepsilon}_k)_{k\in\NN}$ given by
\begin{equation}
\label{e:poi}
\big(\forall k\in\NN\big)
\big(\forall t\in[km\varepsilon,(k+1)m\varepsilon[\big)\quad
\psi^{\varepsilon}(t)=z^{\varepsilon}_k+\frac{t-km\varepsilon}
{m\varepsilon}(z^{\varepsilon}_{k+1}-z^{\varepsilon}_k).
\end{equation}
Then $(\forall \bar t\in\RPP)$
$\sup_{0\leq t\leq \bar t}\|\psi^{\varepsilon}(t)-x(t)\|\to 0$ 
when $\varepsilon\to 0$.
\end{proposition}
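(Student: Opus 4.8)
The plan is to read this as a convergence statement for the explicit Euler scheme associated with the Cauchy problem \eqref{gradflow_T}. Indeed, Lemma~\ref{lema31_T} says that one application of $R^\varepsilon$ equals, up to an $O(\varepsilon^2)$ error, the map $x\mapsto x-\varepsilon m Ax$; since the interpolation \eqref{e:poi} assigns the time step $h=m\varepsilon$ to each increment $z^\varepsilon_{k+1}-z^\varepsilon_k=R^\varepsilon z^\varepsilon_k-z^\varepsilon_k$, the polygonal curve $\psi^\varepsilon$ is precisely the Euler polygon for $x'=-Ax$ with step $h=m\varepsilon$, perturbed by a per-step defect of size $O(\varepsilon^2)$. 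The proof then reduces to the classical consistency-plus-stability argument, the delicate point being to control the defect uniformly in $k$ and $\varepsilon$.

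First I would establish a uniform a priori bound on the iterates. Each factor $\Id+\varepsilon(T_i-\Id)=(1-\varepsilon)\Id+\varepsilon T_i$ is a convex combination of $\Id$ and the nonexpansive $T_i$, hence nonexpansive and self-mapping on the convex set $D$; therefore $R^\varepsilon$ is nonexpansive. Taking the bounded fixed point $z^\varepsilon\in\Fix R^\varepsilon$ with $\|z^\varepsilon\|\leq\beta$ granted by Assumption~\ref{h:2}, nonexpansiveness gives $\|z^\varepsilon_k-z^\varepsilon\|=\|(R^\varepsilon)^k y_0-(R^\varepsilon)^k z^\varepsilon\|\leq\|y_0-z^\varepsilon\|$, so that $\|z^\varepsilon_k\|\leq\|y_0\|+2\beta=:B$ for every $k\in\NN$ and every $\varepsilon\in\zeroeta$. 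Fixing a point $z\in D$ and $\rho$ as in Lemma~\ref{lema31_T}, this bound makes the right-hand side of \eqref{ReA} at $x=z^\varepsilon_k$ at most $\varepsilon^2(3^m-2m-1)(B+\|z\|+\rho)$; that is, the defect $\delta_k:=R^\varepsilon z^\varepsilon_k-z^\varepsilon_k+\varepsilon m A z^\varepsilon_k$ satisfies $\|\delta_k\|\leq K\varepsilon^2$ with $K$ independent of $k$ and $\varepsilon$.

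Next I would run the standard error analysis at the nodes $t_k=km\varepsilon$. Writing $e_k=z^\varepsilon_k-x(t_k)$ and using $x(t_{k+1})=x(t_k)-\int_{t_k}^{t_{k+1}}Ax(s)\,ds$, one obtains $e_{k+1}=e_k-\varepsilon m\big(Az^\varepsilon_k-Ax(t_k)\big)+\delta_k+\theta_k$, where $\theta_k=\int_{t_k}^{t_{k+1}}\big(Ax(s)-Ax(t_k)\big)\,ds$ is the consistency term. Since $A=\Id-T$ is $2$-Lipschitz and $x\in{\EuScript C}^1(\RPP;D)$ has $\|x'\|=\|Ax\|$ bounded on $[0,\bar t]$, we get $\|\theta_k\|\leq 2\int_{t_k}^{t_{k+1}}\|x(s)-x(t_k)\|\,ds\leq C'\varepsilon^2$ and $\|Az^\varepsilon_k-Ax(t_k)\|\leq 2\|e_k\|$, whence $\|e_{k+1}\|\leq(1+2m\varepsilon)\|e_k\|+(K+C')\varepsilon^2$ with $e_0=0$. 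The discrete Gronwall estimate then yields, for every $k$ with $t_k\leq\bar t$, $\|e_k\|\leq\frac{(K+C')\varepsilon}{2m}\big((1+2m\varepsilon)^{\bar t/(m\varepsilon)}-1\big)\leq\frac{(K+C')\varepsilon}{2m}\big(e^{2\bar t}-1\big)=O(\varepsilon)$, uniformly over the nodes in $[0,\bar t]$.

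Finally I would pass from the nodes to the whole interval. On each $[t_k,t_{k+1}]$ the map $\psi^\varepsilon$ is affine with increment $\|z^\varepsilon_{k+1}-z^\varepsilon_k\|=\varepsilon m\|Az^\varepsilon_k\|+O(\varepsilon^2)=O(\varepsilon)$, while $\|x(t)-x(t_k)\|\leq\|x'\|_\infty m\varepsilon=O(\varepsilon)$; combining these with the node bound $\|e_k\|=O(\varepsilon)$ gives $\sup_{0\leq t\leq\bar t}\|\psi^\varepsilon(t)-x(t)\|=O(\varepsilon)\to 0$. The \textbf{main obstacle} is the uniform-in-$\varepsilon$ control of the accumulated defect: a naive estimate loses a factor $1/\varepsilon$ through the $\bar t/(m\varepsilon)$ steps, so it is essential both that the per-step defect be $O(\varepsilon^2)$ (Lemma~\ref{lema31_T}) and that the iterates be bounded uniformly in $k$ and $\varepsilon$ (nonexpansiveness of $R^\varepsilon$ together with Assumption~\ref{h:2}). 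These two facts are exactly what turn the $O(1/\varepsilon)$ accumulation into a net $O(\varepsilon)$ bound.
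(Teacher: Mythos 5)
Your proof is correct, and it rests on the same two pillars as the paper's: Lemma~\ref{lema31_T} as the consistency estimate identifying one sweep of $R^\varepsilon$ with an explicit Euler step of size $m\varepsilon$ for $x'=-Ax$ up to an $O(\varepsilon^2)$ defect, and the uniform boundedness of the iterates $(z^\varepsilon_k)_{k\in\NN}$ obtained from the nonexpansiveness of $R^\varepsilon$ combined with the bounded fixed point supplied by Assumption~\ref{h:2}. Where you diverge is in the stability step. You compare $z^\varepsilon_k$ with $x(t_k)$ at the nodes $t_k=km\varepsilon$ and close the error recursion with a discrete Gronwall inequality, using only that $A=\Id-T$ is $2$-Lipschitz; this yields a bound of order $\varepsilon\,e^{2\bar t}$ on $[0,\bar t]$, which suffices since $\bar t$ is fixed. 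The paper instead regards $\psi^\varepsilon$ as a solution of the perturbed evolution equation $(\psi^{\varepsilon})'=-A\psi^{\varepsilon}+h^\varepsilon$ with $\|h^\varepsilon(t)\|\leq\gamma\varepsilon$, and exploits the \emph{monotonicity} of $A$ to get $\theta'(t)\leq 2\gamma\varepsilon\sqrt{\theta(t)}$ for $\theta=\|x-\psi^\varepsilon\|^2$, hence the Gronwall-free bound $\|\psi^{\varepsilon}(t)-x(t)\|\leq\gamma\varepsilon t$, linear in $t$ with no exponential factor. Your route is the standard numerical-analysis argument and is perfectly adequate for the statement; the paper's buys a sharper time dependence by using the structural fact that $A$ is monotone rather than merely Lipschitz. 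A small point in your favor: you make explicit the estimate $\|z^\varepsilon_k\|\leq\|y_0\|+2\beta$ via $\|z^\varepsilon_k-z^\varepsilon\|\leq\|y_0-z^\varepsilon\|$, which the paper invokes only implicitly when asserting the existence of the uniform constant $\alpha$ bounding $\|h^\varepsilon_k\|$.
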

\begin{proof}
Set $A=\Id-T$, let $\varepsilon\in\zeroeta$, and fix $z\in D$.
The function $\psi^{\varepsilon}$ is differentiable except at the
breakpoints $\menge{km\varepsilon}{k\in\NN}$.  
Now set $(\forall k\in\NN)$
$J_k=\left]km\varepsilon,(k+1)m\varepsilon\right[$.
According to Lemma~\ref{lema31_T}, we have
\begin{equation}
(\forall k\in\NN)(\forall t\in J_k)\quad
(\psi^\varepsilon)'(t)=
\frac{1}{m\varepsilon}(z^{\varepsilon}_{k+1}-z^{\varepsilon}_k)
=\frac{1}{m\varepsilon}(R^{\varepsilon}z^{\varepsilon}_k
-z^{\varepsilon}_k)
=-Az^{\varepsilon}_k+\varepsilon h^\varepsilon_k,
\end{equation}
where $\|h^\varepsilon_k\|\leq(3^m-2m-1)
(\|z^{\varepsilon}_k-z\|+\rho)/m$.
Now set 
\begin{equation}
(\forall k\in\NN)(\forall t\in J_k)\quad
h^\varepsilon(t)=
A\psi^{\varepsilon}(t)-Az^{\varepsilon}_k+\varepsilon
h^\varepsilon_k. 
\end{equation}
Then
\begin{equation}
(\forall k\in\NN)(\forall t\in J_k)\quad
(\psi^{\varepsilon})'(t)
=-A\psi^{\varepsilon}(t)+h^\varepsilon(t).
\end{equation}
Moreover, it follows from \eqref{e:H} that there exists a constant 
$\alpha\in\RPP$ independent from $\varepsilon$ such
that $(\forall k\in\NN)$ $\|h^\varepsilon_k\|\leq\alpha$.
Hence, since $A$ is 2-Lipschitz, there exists $\gamma\in\RPP$
such that
\begin{align}
(\forall k\in\NN)(\forall t\in J_k)\quad
\|h^\varepsilon(t)\|
&\leq 2\|\psi^{\varepsilon}(t)-z^{\varepsilon}_k\|+
\varepsilon\|h^\varepsilon_k\|\nonumber\\
&\leq2\|z^{\varepsilon}_{k+1}-z^{\varepsilon}_k\|+
\varepsilon\|h^\varepsilon_k\| \nonumber\\
&=2\varepsilon m \|-Az^{\varepsilon}_k+\varepsilon
h^\varepsilon_k\|+\varepsilon\|h^\varepsilon_k\| \nonumber\\
&\leq\varepsilon \gamma.
\end{align}
Next, consider the function $\theta\colon\RP\to\RP$ defined by 
$\theta(t)=\|x(t)-\psi^{\varepsilon}(t)\|^2$. Then it 
follows from the monotonicity of $A$ that
\begin{align}
(\forall t\in\RP\smallsetminus\menge{km\varepsilon}{k\in\NN})\quad
\theta'(t)&=2\scal{x(t)-\psi^{\varepsilon}(t)}{x'(t)-
(\psi^{\varepsilon})'(t)}\nonumber\\
&=2\scal{x(t)-\psi^{\varepsilon}(t)}
{A\psi^{\varepsilon}(t)-h^\varepsilon(t)-Ax(t)}\nonumber\\
&\leq2\scal{x(t)-\psi^{\varepsilon}(t)}{-h^\varepsilon(t)}
\nonumber\\
&\leq2\|x(t)-\psi^{\varepsilon}(t)\|\,\|h^\varepsilon(t)\|
\nonumber\\
&\leq2\varepsilon \gamma\sqrt{\theta(t)}.
\end{align}
Integrating this inequality and noting that $\theta(0)=0$, we 
obtain $(\forall t\in\RP)$ 
$\|\psi^{\varepsilon}(t)-x(t)\|=
\sqrt{\theta(t)}\leq\varepsilon\gamma t$. Now let $\bar t\in\RPP$.
Then 
$\sup_{0\leq t\leq \bar t}\|\psi^{\varepsilon}(t)-x(t)\|\leq
\varepsilon\gamma\bar t\to 0$ as $\varepsilon\to 0$.
\end{proof}

\subsection{Strong convergence under stability of 
approximate cycles} 

In this section, we investigate the strong convergence
of the cycles defined in \eqref{e:2011-12-06} when a 
stability condition holds.

\begin{theorem}
\label{p33_T}
Suppose that Assumptions~\ref{h:1} and \ref{h:2} are satisfied, 
and that
\begin{equation}
\label{e:stab}
(\forall z\in\Fix T)\quad
\lim_{\varepsilon\to 0}d_{\Fix R^{\varepsilon}}(z)=0.
\end{equation}
In addition, let $y_0\in D$, and suppose that 
the orbit of $y_0$ in the Cauchy problem 
\eqref{gradflow_T} converges strongly, say 
$x(t)\to\overline{x}\in D$ 
as $t\to\pinf$. For every $\varepsilon\in\zeroeta$, let
$(x_i^\varepsilon)_{i\in I}$
be the cycle obtained as the weak limit of \eqref{e:2012-04-09a} 
in Proposition~\ref{p:2013-04-22}. 
Then $\overline{x}\in\Fix T$ and $(\forall i\in I)$ 
$x_i^\varepsilon\to\overline{x}$ when $\varepsilon\to 0$.
\end{theorem}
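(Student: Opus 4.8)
The plan is to combine the finite-horizon tracking estimate of Proposition~\ref{sd_T} with a Fej\'er-monotonicity argument anchored at a fixed point of $R^{\varepsilon}$ lying close to $\overline{x}$, whose existence is supplied by the stability hypothesis \eqref{e:stab}. First I would check that $\overline{x}\in\Fix T$. Since $A=\Id-T$ is continuous and $x(t)\to\overline{x}$, the equation in \eqref{gradflow_T} gives $x'(t)=-Ax(t)\to-A\overline{x}$; if $A\overline{x}\neq 0$ then $x(t)=x(t_1)+\int_{t_1}^t x'(s)\,ds$ would be unbounded, because the integrand tends to the nonzero constant $-A\overline{x}$, contradicting the convergence of $x(t)$. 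Hence $A\overline{x}=0$, i.e.\ $\overline{x}\in\Fix T$, so $\overline{x}$ is an admissible point $z$ in \eqref{e:stab}: picking $u^{\varepsilon}\in\Fix R^{\varepsilon}$ with $\|u^{\varepsilon}-\overline{x}\|\leq d_{\Fix R^{\varepsilon}}(\overline{x})+\varepsilon$, we get $\|u^{\varepsilon}-\overline{x}\|\to 0$ as $\varepsilon\to 0$.

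Next, writing $z_k^{\varepsilon}=(R^{\varepsilon})^k y_0$, recall that $x_m^{\varepsilon}$ is the weak limit of $(z_k^{\varepsilon})_{k\in\NN}$ and lies in $\Fix R^{\varepsilon}$. The key observation is that each factor $\Id+\varepsilon(T_i-\Id)=(1-\varepsilon)\Id+\varepsilon T_i$ is nonexpansive for $\varepsilon\in[0,1]$, hence so is their composition $R^{\varepsilon}$; consequently $k\mapsto\|z_k^{\varepsilon}-u^{\varepsilon}\|$ is nonincreasing. Passing to the weak limit and invoking weak lower semicontinuity of the norm, I obtain, for every index $k_0$, the inequality $\|x_m^{\varepsilon}-u^{\varepsilon}\|\leq\|z_{k_0}^{\varepsilon}-u^{\varepsilon}\|$, and therefore $\|x_m^{\varepsilon}-\overline{x}\|\leq\|z_{k_0}^{\varepsilon}-\overline{x}\|+2\|u^{\varepsilon}-\overline{x}\|$.

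It then remains to make the right-hand side small. Fix $\delta>0$ and, using $x(t)\to\overline{x}$, choose $T_0$ with $\|x(t)-\overline{x}\|\leq\delta$ for $t\geq T_0$. For each $\varepsilon$ let $k_0=k_0(\varepsilon)$ be the least integer with $k_0 m\varepsilon\geq T_0$, so $k_0 m\varepsilon\in[T_0,T_0+m\varepsilon]$; fix $\bar t=T_0+1$. Since $\psi^{\varepsilon}(k_0 m\varepsilon)=z_{k_0}^{\varepsilon}$ by \eqref{e:poi}, Proposition~\ref{sd_T} yields $\|z_{k_0}^{\varepsilon}-x(k_0 m\varepsilon)\|\leq\sup_{0\leq t\leq\bar t}\|\psi^{\varepsilon}(t)-x(t)\|\to 0$, while $\|x(k_0 m\varepsilon)-\overline{x}\|\leq\delta$. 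Combined with $\|u^{\varepsilon}-\overline{x}\|\to 0$, this gives $\limsup_{\varepsilon\to 0}\|x_m^{\varepsilon}-\overline{x}\|\leq\delta$, and since $\delta>0$ was arbitrary, $x_m^{\varepsilon}\to\overline{x}$. Finally, \eqref{e:2011-12-06} gives $\|x_i^{\varepsilon}-x_{i-1}^{\varepsilon}\|=\varepsilon\|T_i x_{i-1}^{\varepsilon}-x_{i-1}^{\varepsilon}\|$, which tends to $0$ by the now-established boundedness of the cycle, so an $m$-fold telescoping starting from $x_m^{\varepsilon}\to\overline{x}$ propagates strong convergence to every $x_i^{\varepsilon}$.

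The main obstacle is the interchange of the two limits $k\to\pinf$ (which defines $x_m^{\varepsilon}$) and $\varepsilon\to 0$ (over which Proposition~\ref{sd_T} is only a finite-horizon statement). The Fej\'er monotonicity of the orbit with respect to the nearby fixed point $u^{\varepsilon}$ is precisely what transfers the good estimate available at the moderate time $k_0 m\varepsilon\approx T_0$ to the terminal point $x_m^{\varepsilon}$; it is here that the stability condition \eqref{e:stab} is indispensable, since without a bona fide fixed point of $R^{\varepsilon}$ close to $\overline{x}$ there is no anchor along which to pass to the limit in $k$.
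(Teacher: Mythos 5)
Your proposal is correct and follows essentially the same route as the paper: identify $\overline{x}\in\Fix T$ from the flow, use Proposition~\ref{sd_T} at a finite horizon to place $z_{k_0}^{\varepsilon}$ near $\overline{x}$, anchor the Fej\'er-monotone orbit at a point of $\Fix R^{\varepsilon}$ close to $\overline{x}$ supplied by \eqref{e:stab}, and pass to the weak limit in $k$ via lower semicontinuity of the norm before letting $\varepsilon\to 0$. The only (immaterial) difference is that the paper anchors at the exact projection $P_{\Fix R^{\varepsilon}}\overline{x}$, whereas you use an approximate nearest point, and it concludes via \eqref{e:2013-03-24} rather than re-deriving the telescoping step.
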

\begin{proof}
Since $x(t)\to\overline{x}$, \eqref{gradflow_T} implies that $x'(t)$
converges to $A\overline{x}$ and therefore $A\overline{x}=0$ since
$x'(t)\to 0$. Hence, $\overline{x}\in\Fix T$. Now fix 
$\delta\in\RPP$ and $\bar t\in\RPP$ such that 
$(\forall t\in[\bar t,\pinf[)$ 
$\|x(t)-\overline{x}\|\leq\delta$. 
For every $\varepsilon\in\zeroeta$,
set $(z_k^\varepsilon)_{k\in\NN}=(y^{\varepsilon}_{km})_{k\in\NN}
=((R^{\varepsilon})^ky_0)_{k\in\NN}$ 
and define the function $\psi^{\varepsilon}$ as in \eqref{e:poi}.
By Proposition~\ref{sd_T}, there exists $\varepsilon_0\in\zeroeta$ 
such that
\begin{equation}
(\forall\varepsilon\in\left]0,\varepsilon_0\right[)
(\forall t\in\left[0,\bar t+m\right])\quad
\|\psi^{\varepsilon}(t)-x(t)\|\leq\delta. 
\end{equation}
Now let $\varepsilon\in\left]0,\varepsilon_0\right[$, choose 
$k_0\in\NN$ such that $k_0 m\varepsilon\in[\bar t,\bar t+m]$,
and set $\bar{x}^\varepsilon=P_{\Fix R^{\varepsilon}}\overline{x}$
(recall that, since $D$ is closed and convex and $R^{\varepsilon}$ 
is nonexpansive, $\Fix R^{\varepsilon}$ is closed and convex
\cite[Corollary~4.15]{Livre1}).
Then $\|z^{\varepsilon}_{k_0}-x(k_0 m\varepsilon)\|=
\|\psi^{\varepsilon}(k_0 m \varepsilon)-x(k_0m\varepsilon)\|
\leq\delta$
and therefore $\|z^{\varepsilon}_{k_0}-\overline{x}\|\leq 2\delta$. 
Since $R^{\varepsilon}$ is nonexpansive, we have
$(\forall k\in\NN)$ $\|z^{\varepsilon}_{k+1}-\bar{x}^\varepsilon\|
\leq\|z^{\varepsilon}_k-\bar{x}^\varepsilon\|$. Hence, for every
integer $k\geq k_0$, we have
\begin{equation}
\|z^{\varepsilon}_k-\bar{x}^\varepsilon\|\leq
\|z^{\varepsilon}_{k_0}-\bar{x}^\varepsilon\|\leq
\|z^{\varepsilon}_{k_0}-\overline{x}\|+
\|\overline{x}-\bar{x}^\varepsilon\|\leq 2\delta
+d_{\Fix R^\varepsilon}(\overline{x})
\end{equation}
and therefore
\begin{equation}
\|y^{\varepsilon}_{km}-\overline{x}\|=
\|z^{\varepsilon}_k-\overline{x}\|\leq 2\delta
+2d_{\Fix R^\varepsilon}(\overline{x}).
\end{equation}
Since Proposition~\ref{p:2013-04-22} asserts that 
$y^{\varepsilon}_{km}\weakly\xe_m$, we get
\begin{equation}
\label{e:2013-04-22h}
\|\xe_m-\overline{x}\|\leq\varliminf_{k\to\pinf}
\|y^{\varepsilon}_{km}-\overline{x}\|
\leq 2\delta+2d_{\Fix R^\varepsilon}(\overline{x}),
\end{equation}
and \eqref{e:stab} yields
\begin{equation}
\varlimsup_{\varepsilon\to 0}
\|x^\varepsilon_m-\overline{x}\|\leq 2\delta.
\end{equation}
Letting $\delta\to 0$, we deduce that $\xe_m\to\overline{x}$ as
$\varepsilon\to 0$. In turn, 
it follows from \eqref{e:2013-03-24} that $(\forall i\in I)$
$\xe_i\to\overline{x}$ as $\varepsilon\to 0$.
\end{proof}

The following corollary settles entirely De Pierro's conjecture in
the case of $m=2$ closed convex sets in Euclidean spaces.

\begin{corollary} 
In Assumption~\ref{h:1}, suppose that $\HH$ is finite-dimensional,
$D=\HH$, and $m=2$, and let $T_1=P_1$ and $T_2=P_2$ be the 
projection operators onto nonempty closed convex sets such that
\begin{equation}
\label{e:S1}
\Fix T=S=\operatorname{Argmin}\Phi\neq\emp,
\quad\text{where}\quad\Phi=\frac{1}{4}\big(d_{C_1}^2+d_{C_2}^2\big).
\end{equation}
Let $y_0\in\HH$ and let $\overline{x}\in S$ be the limit of the the
solution $x$ of Cauchy problem
\begin{equation}
\begin{cases}
x'(t)+x(t)=\frac{1}{2}\big(P_1x(t)+P_2x(t)\big)
\;\,\text{on}\;\RPP\\
x(0)=y_0.
\end{cases}
\end{equation}
For for every $\varepsilon\in\zeroun$, let
$x_1^\varepsilon=\lim_{k\to\pinf}y^\varepsilon_{2k+1}$ and 
$x_2^\varepsilon=\lim_{k\to\pinf}y^\varepsilon_{2k+2}$, where
\begin{equation}
\label{e:2013-04-23a}
(\forall k\in\NN)\quad 
\begin{array}{l}
\left\lfloor
\begin{array}{ll}
y^\varepsilon_{2k+1}&=\big(\Id+\varepsilon(P_{1}-\Id)\big)
y^\varepsilon_{2k}\\[2mm]
y^\varepsilon_{2k+2}&=\big(\Id+\varepsilon(P_{2}-\Id)\big)
y^\varepsilon_{2k+1}.
\end{array}
\right.
\end{array}
\end{equation}
Then $x_1^\varepsilon\to\overline{x}$ and 
$x_2^\varepsilon\to\overline{x}$ when $\varepsilon\to 0$.
\end{corollary}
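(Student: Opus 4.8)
The plan is to derive the statement from Theorem~\ref{p33_T}. The standing setup gives Assumption~\ref{h:1} for free, and here $T=\tfrac12(P_1+P_2)$ with $A=\Id-T=\nabla\Phi$, so that $\Fix T=\Argmin\Phi=S$, consistent with \eqref{e:S1}. Thus the three remaining things to verify are: Assumption~\ref{h:2}, the stability condition \eqref{e:stab}, and strong convergence of the orbit of \eqref{gradflow_T} to a point $\overline{x}\in\Fix T$. The last point is exactly where finite-dimensionality enters: \eqref{gradflow_T} is here the gradient flow $x'=-\nabla\Phi(x)$ of the convex function $\Phi$, which by \cite[Theorem~4]{Bruc75} converges weakly to a minimizer since $S=\Argmin\Phi\neq\emp$; in a finite-dimensional space weak convergence is strong, so $x(t)\to\overline{x}\in S=\Fix T$.

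The crux is to exhibit, for each $\varepsilon$, an explicit element of $\Fix R^{\varepsilon}$ lying close to $S$; this will settle both Assumption~\ref{h:2} and \eqref{e:stab}. Fix $z\in S$ and set $a=P_1z$, $b=P_2z$, and $v=a-b$. From $z=Tz=\tfrac12(a+b)$ one gets $z-a=-v/2$ and $z-b=v/2$, so the variational characterization of the projections yields $-v\in N_{C_1}(a)$ and $v\in N_{C_2}(b)$ (equivalently, $(a,b)$ is a best approximation pair: $a=P_1b$ and $b=P_2a$). I then claim that
\begin{equation}
w_1^\varepsilon=b+\frac{v}{2-\varepsilon}
\qquad\text{and}\qquad
w_2^\varepsilon=a-\frac{v}{2-\varepsilon}
\end{equation}
solve the relaxed cycle \eqref{e:2011-12-06} for $m=2$. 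Indeed, since $N_{C_1}(a)$ and $N_{C_2}(b)$ are cones, $w_2^\varepsilon-a=-v/(2-\varepsilon)\in N_{C_1}(a)$ gives $P_1w_2^\varepsilon=a$ and likewise $P_2w_1^\varepsilon=b$; substituting these into $(1-\varepsilon)w_2^\varepsilon+\varepsilon P_1w_2^\varepsilon$ and $(1-\varepsilon)w_1^\varepsilon+\varepsilon P_2w_1^\varepsilon$ returns $w_1^\varepsilon$ and $w_2^\varepsilon$, respectively. Hence $w_2^\varepsilon\in\Fix R^{\varepsilon}$.

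This construction delivers the two missing hypotheses at once. Choosing one fixed $z\in S$, the points $w_2^\varepsilon$ are uniformly bounded for $\varepsilon\in\zeroun$ (one has $\|w_2^\varepsilon-z\|\le\|v\|/2$), so \eqref{e:H} holds with $\eta=1$ and Assumption~\ref{h:2} is satisfied. For the stability condition, applying the construction to an arbitrary $z\in S$ and using $w_2^\varepsilon\in\Fix R^\varepsilon$ gives
\begin{equation}
d_{\Fix R^\varepsilon}(z)\le\|w_2^\varepsilon-z\|
=\frac{\varepsilon}{2(2-\varepsilon)}\,\|v\|\longrightarrow 0
\quad\text{as }\varepsilon\to 0,
\end{equation}
which is exactly \eqref{e:stab}. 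With Assumptions~\ref{h:1} and \ref{h:2}, the stability condition \eqref{e:stab}, and the strong convergence $x(t)\to\overline{x}\in\Fix T$ all established, Theorem~\ref{p33_T} applies and yields $x_1^\varepsilon\to\overline{x}$ and $x_2^\varepsilon\to\overline{x}$ as $\varepsilon\to 0$, where $(x_1^\varepsilon,x_2^\varepsilon)$ is the cycle \eqref{e:2013-04-23a} furnished by Proposition~\ref{p:2013-04-22} (note that this iteration limit need not coincide with the auxiliary point $w_2^\varepsilon$, which serves only to verify the hypotheses).

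The main obstacle is precisely the nonemptiness and stability of $\Fix R^\varepsilon$: as Examples~\ref{ex:2013-04-06}--\ref{ex:2013-04-08c} show, $\Fix R^\varepsilon$ can be empty even when $\Fix T\neq\emp$, so nonemptiness cannot be taken for granted and the fixed points must be produced explicitly. What makes $m=2$ tractable is that a minimizer of $\Phi$ encodes a best approximation pair, which can be translated along the displacement vector $v$ to track $\Fix R^\varepsilon$ by hand; this variational structure is exactly what is lost for $m\ge 3$, where the cyclic fixed-point equations are not the optimality conditions of any functional. The only points warranting care are the sign conventions in the normal-cone characterization of the projections and the verification that the proposed $(w_1^\varepsilon,w_2^\varepsilon)$ indeed returns itself under the relaxed iteration, but no deeper difficulty is expected beyond the construction itself.
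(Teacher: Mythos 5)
Your proof is correct and follows essentially the same route as the paper: your $w_2^\varepsilon=a-v/(2-\varepsilon)=((1-\varepsilon)a+b)/(2-\varepsilon)$ is exactly the explicit fixed point $z^\varepsilon$ of $R^\varepsilon$ that the paper exhibits, leading to the same bound $d_{\Fix R^\varepsilon}(z)\leq\varepsilon\|a-b\|/(2(2-\varepsilon))$ and the same appeal to Theorem~\ref{p33_T}. You merely make explicit some steps the paper leaves tacit (the normal-cone verification that $w_2^\varepsilon\in\Fix R^\varepsilon$, the check of Assumption~\ref{h:2}, and the use of finite-dimensionality to upgrade Bruck's weak convergence of the gradient flow to strong convergence), all of which are correct.
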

\begin{proof} 
Fix $z\in S$, and set $a=P_1z$ and $b=P_2z$. Then $z=(a+b)/2$
and $(\forall\varepsilon\in\zeroun)$
$z^{\varepsilon}=((1-\varepsilon) a+b)/(2-\varepsilon)\in
\Fix R^{\varepsilon}$. Thus 
\begin{equation}
\label{m=2}
d_{\Fix R^{\varepsilon}}(z)\leq\|z-z^{\varepsilon}\|=
\frac{\varepsilon\|b-a\|}{2(2-\varepsilon)}\to 0
\quad\text{as}\quad\varepsilon\to 0,
\end{equation}
and the conclusion follows from Theorem~\ref{p33_T}.
\end{proof}

\begin{figure}[t]
\centering
\includegraphics[width=7cm]{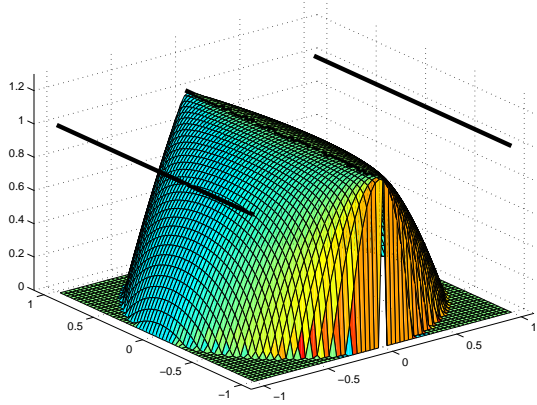}
\caption{An example in which the condition \eqref{e:stab} fails.}
\label{fig:2}
\end{figure}

We conclude this section by showing that, in contrast with 
\eqref{m=2}, the condition \eqref{e:stab} can fail in the case 
of projection operators in the presence of $m=3$ sets.

\begin{example}
\label{contraejemplo}
Suppose that $\HH=\RR^3$ and $m=3$, and let $T_1$, $T_2$, and
$T_3$ be, respectively, the projection operators onto the bounded
closed convex sets (see Fig.~\ref{fig:2})
\begin{equation}
\begin{cases}
C_1=[-1,1]\times\{-1\}\times\{1\}\\
C_2=[-1,1]\times\{1\}\times\{1\}\\
C_3=\menge{(\xi_1,\xi_2,\xi_3)\in\RR^3}
{\xi_1\in[-1,1],\,\xi_3\in[0,1],\,
(1-\xi_3)(\xi_1^2-1)+\xi_2^2\leq 0}.
\end{cases}
\end{equation}
Then the set of least-squares solutions is 
$S=\Fix T=[-1,1]\times\{0\}\times\{1\}\subset C_3$. Moreover, 
\begin{equation}
(\forall\varepsilon\in\zeroun)\quad
\Fix R^\varepsilon=\{z^\varepsilon\}=
\left\{\left(0,\frac{w_\varepsilon+\varepsilon(1-\varepsilon)}
{3(1-\varepsilon)+\varepsilon^2},1-\frac{w_\varepsilon^2}
{3(1-\varepsilon)+\varepsilon^2}\right)\right\},
\end{equation}
where $w_\varepsilon$ is the unique real solution of
$2w^3+w=\varepsilon/(2-\varepsilon)$.
Clearly $z^\varepsilon\to(0,0,1)\in S$ as
$\varepsilon\to 0$, but 
$(\forall z\in S\smallsetminus\{(0,0,1)\})$ 
$d_{\Fix R^\varepsilon}(z)\not\to 0$ as $\varepsilon\to 0$.
\end{example}

\subsection{Strong convergence under local strong monotonicity}

Another situation covered by Theorem~\ref{p33_T} is when the
operator $T$ has a unique fixed point $\overline{x}$ and $A=\Id-T$ 
is locally strongly monotone around $\overline{x}$,  namely
\begin{equation}
\label{M_T}
(\exi\alpha\in\RPP)(\exi\delta\in\RPP)
(\forall x\in D\cap B(\overline{x};\delta))\quad
\scal{x-\overline{x}}{x-Tx}\geq\alpha\|x-\overline{x}\|^2.
\end{equation}
In the case of convex projections operators, then 
$A=\nabla\Phi$ and, if $\Phi$
is twice differentiable at $\overline{x}$, then \eqref{M_T} is 
equivalent to the positive-definiteness of 
$\nabla^2\Phi(\overline{x})$.
Another case in which \eqref{M_T} is satisfied, with 
$\alpha=1-\rho$, is when $T$
is a local strict contraction  with constant $\rho\in\zeroun$
at the fixed point $\overline{x}$, namely, for all
$x$ in some ball $B(\overline{x};\delta)$,
$\|Tx-T\overline{x}\|\leq\rho\|x-\overline{x}\|$.
If $T$ is differentiable at $\overline{x}$ this
amounts to $\|T'(\overline{x})\|<1$.

\begin{theorem} 
\label{p38_T}
Suppose that Assumptions~\ref{h:1} and \ref{h:2} are satisfied,
together with \eqref{M_T}, and let $\Fix T=\{\bar{x}\}$. In 
addition, let $y_0\in D$ and, for every $\varepsilon\in\zeroeta$, 
let $(x_i^\varepsilon)_{i\in I}$ be the cycle obtained as the weak 
limit of \eqref{e:2012-04-09a} in Proposition~\ref{p:2013-04-22}. 
Then $(\forall i\in I)$ $x_i^\varepsilon\to\overline{x}$ as 
$\varepsilon\to 0$.
\end{theorem}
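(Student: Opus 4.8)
The plan is to deduce Theorem~\ref{p38_T} from Theorem~\ref{p33_T}. Since $\Fix T=\{\overline{x}\}$ is a singleton, this requires only two ingredients: first, that the stability condition \eqref{e:stab} holds, which here reduces to $d_{\Fix R^{\varepsilon}}(\overline{x})\to 0$ as $\varepsilon\to 0$; and second, that the orbit $x(\cdot)$ of the Cauchy problem \eqref{gradflow_T} converges strongly to $\overline{x}$. Once both are established, Theorem~\ref{p33_T} delivers $x_i^\varepsilon\to\overline{x}$ for every $i\in I$, which is exactly the claim.

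To obtain strong convergence of the orbit, I would set $A=\Id-T$ and $\phi(t)=\|x(t)-\overline{x}\|^2$. As $T$ is nonexpansive, $A$ is monotone and $A\overline{x}=0$, so $\phi'(t)=-2\scal{x(t)-\overline{x}}{Ax(t)}\leq 0$; thus $\phi$ is nonincreasing, converges to some $\ell^2\geq 0$, and $\int_0^{\pinf}\scal{x(t)-\overline{x}}{Ax(t)}\,dt=\tfrac12(\phi(0)-\ell^2)<\pinf$. The conceptual point is to force the orbit into the ball $B(\overline{x};\delta)$ on which \eqref{M_T} is active. For this I would show that local strong monotonicity propagates into a global lower bound off the ball: for $x\in D$ with $\|x-\overline{x}\|\geq\delta$, set $x_s=\overline{x}+s(x-\overline{x})$ with $s=\delta/\|x-\overline{x}\|\in\rzeroun$, so that $\|x_s-\overline{x}\|=\delta$. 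Monotonicity of $A$ applied to $x$ and $x_s$ (using $x-x_s=(1-s)(x-\overline{x})$ with $1-s\geq 0$) gives $\scal{x-\overline{x}}{Ax}\geq\scal{x-\overline{x}}{Ax_s}=s^{-1}\scal{x_s-\overline{x}}{Ax_s}\geq s^{-1}\alpha\delta^2=\alpha\delta\|x-\overline{x}\|\geq\alpha\delta^2$. Consequently, were $\ell\geq\delta$, we would have $\scal{x(t)-\overline{x}}{Ax(t)}\geq\alpha\delta^2$ for all $t$, contradicting the finiteness of the integral above. Hence $\ell<\delta$, so there is $t_0$ with $x(t_0)\in B(\overline{x};\delta)$; by the monotonicity of $\phi$ the orbit remains in that ball for $t\geq t_0$, where \eqref{M_T} yields $\phi'(t)\leq-2\alpha\phi(t)$, and Gr\"onwall's inequality forces $\phi(t)\to 0$, i.e.\ $x(t)\to\overline{x}$ strongly.

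For the stability condition, I would invoke Lemma~\ref{lema31_T} with $z=\overline{x}$ to show that, for $\varepsilon$ small, $R^{\varepsilon}$ is a strict contraction of $D\cap B(\overline{x};\delta)$ into itself. Indeed, for such $x$ one has $R^{\varepsilon}x-\overline{x}=(x-\overline{x})-\varepsilon m Ax+e^\varepsilon(x)$ with $\|e^\varepsilon(x)\|\leq\varepsilon^2(3^m-2m-1)(\|x-\overline{x}\|+\rho)$, where $\rho=\max_{i\in I}\|T_i\overline{x}-\overline{x}\|/2$. Using $A\overline{x}=0$, \eqref{M_T}, and the fact that $A$ is $2$-Lipschitz, I get $\|(x-\overline{x})-\varepsilon m Ax\|^2\leq(1-2\varepsilon m\alpha+4\varepsilon^2 m^2)\|x-\overline{x}\|^2$, so the overall contraction factor is $\kappa_\varepsilon=1-\varepsilon m\alpha+O(\varepsilon^2)<1$, while $\|R^{\varepsilon}\overline{x}-\overline{x}\|\leq\varepsilon^2(3^m-2m-1)\rho=O(\varepsilon^2)$. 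Since $D\cap B(\overline{x};\delta)$ is complete and is mapped into itself for $\varepsilon$ small, the Banach fixed point theorem produces $z^\varepsilon\in\Fix R^{\varepsilon}$ with $\|z^\varepsilon-\overline{x}\|\leq\|R^{\varepsilon}\overline{x}-\overline{x}\|/(1-\kappa_\varepsilon)=O(\varepsilon)\to 0$, whence $d_{\Fix R^{\varepsilon}}(\overline{x})\to 0$ and \eqref{e:stab} holds.

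I expect the main obstacle to lie in the orbit-convergence step, and precisely in arguing that the trajectory must enter $B(\overline{x};\delta)$: the hypothesis \eqref{M_T} is a priori vacuous along an orbit that stays at distance $\geq\delta$ from $\overline{x}$, which is a genuine threat in infinite dimensions, where monotone flows can converge weakly but not strongly. The propagation bound $\scal{x-\overline{x}}{Ax}\geq\alpha\delta\|x-\overline{x}\|$ valid off the ball is what removes this threat, and once it is available the Gr\"onwall estimate and the Banach-contraction computation for \eqref{e:stab} are routine.
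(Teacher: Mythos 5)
Your overall strategy --- reduce to Theorem~\ref{p33_T} by verifying \eqref{e:stab} (which, since $\Fix T=\{\overline{x}\}$, amounts to $d_{\Fix R^{\varepsilon}}(\overline{x})\to 0$) and the strong convergence of the orbit of \eqref{gradflow_T} --- is exactly the paper's. Your treatment of the orbit is correct and takes a genuinely different route: where the paper launches a comparison trajectory from the point of the sphere $\{y:\|y-\overline{x}\|=\delta\}$ on the ray toward $x(t_0)$ and exploits the nonexpansiveness of the semigroup to reach a contradiction, you propagate \eqref{M_T} radially outward using only the monotonicity of $A$ along the segment $[\overline{x},x]$ (note that $x_s\in D\cap B(\overline{x};\delta)$ by convexity of $D$), obtaining $\scal{x-\overline{x}}{Ax}\geq\alpha\delta\|x-\overline{x}\|$ off the ball. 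This forces the orbit into $B(\overline{x};\delta)$ in finite time and is arguably more self-contained than the paper's argument; after that, the Gr\"onwall step is the same.

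The gap is in the stability step. Condition \eqref{M_T} is strong monotonicity of $A=\Id-T$ only \emph{at} $\overline{x}$, i.e., for pairs $(x,\overline{x})$; it does not yield the two-point estimate $\|R^{\varepsilon}x-R^{\varepsilon}y\|\leq\kappa_\varepsilon\|x-y\|$ on $D\cap B(\overline{x};\delta)$ that you need to invoke the Banach fixed point theorem and the bound $\|z^\varepsilon-\overline{x}\|\leq\|R^{\varepsilon}\overline{x}-\overline{x}\|/(1-\kappa_\varepsilon)$. Already in one dimension one can take $A$ continuous nondecreasing with $0\leq A'\leq 2$ (so $T=\Id-A$ is nonexpansive), satisfying $\xi A(\xi)\geq\alpha\xi^2$ near $0$, yet constant on a subinterval of the ball; then no uniform two-point contraction factor exists, so your claimed $O(\varepsilon)$ rate is unjustified. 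The inequality you actually derived, $\|R^{\varepsilon}x-\overline{x}\|\leq\kappa_\varepsilon\|x-\overline{x}\|+\varepsilon^2(3^m-2m-1)(\|x-\overline{x}\|+\rho)$, is the paper's \eqref{e:KJ2013c}, and it does suffice --- but via a different mechanism: for each $\delta'\in\left]0,\delta\right]$ it shows that $R^{\varepsilon}$ maps the bounded closed convex set $D\cap B(\overline{x};\delta')$ into itself once $\varepsilon$ is small enough, so the Browder--G\"ohde--Kirk theorem (already used in Proposition~\ref{p:zenzile}) provides a fixed point there, whence $d_{\Fix R^{\varepsilon}}(\overline{x})\leq\delta'$ for all small $\varepsilon$; letting $\delta'\to 0$ gives \eqref{e:stab}. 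This repairs your step at the cost of the quantitative rate, which is not needed for the theorem.
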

\begin{proof}
It suffices to check the assumptions of Theorem~\ref{p33_T}. 
Set $A=\Id-T$ and let $x$ be the solution to \eqref{gradflow_T}.
\begin{itemize}
\item
$d_{\Fix R^{\varepsilon}}(\overline{x})\to 0$ as 
$\varepsilon\to 0$:\\
Let $\varepsilon\in\left]0,\min\{\eta,\alpha/(2m)\}\right[$,
set $Q^\varepsilon=\Id-m\varepsilon A$ and 
$\gamma(\varepsilon)=1-m\varepsilon(\alpha-2m\varepsilon)$,
and let $y\in D\cap B(\overline{x};\delta)$.
Since $A\overline{x}=0$ and $A$ is 2-Lipschitz, we have
\begin{align}
\label{e:KJ2013a}
\|Q^\varepsilon y-\overline{x}\|^2
&=\|y-\overline{x}\|^2-2m\varepsilon\scal{y-\overline{x}}
{Ay-A\overline{x}}+(m\varepsilon)^2\|Ay-A\overline{x}\|^2\nonumber\\
&\leq(1-2m\varepsilon(\alpha-2m\varepsilon))\|y-\overline{x}\|^2 
\nonumber\\
&\leq\gamma(\varepsilon)^2\|y-\overline{x}\|^2.
\end{align}
On the other hand, setting 
$\rho=\max_{i\in I}\|T_i\overline{x}-\overline{x}\|/2$ 
and $\beta=3^m-2m-1$, Lemma~\ref{lema31_T} gives 
\begin{equation}
\label{e:KJ2013b}
\|R^{\varepsilon}
y-Q^\varepsilon y\|\leq \varepsilon^2 \beta(\|y-\overline{x}\|+\rho)
\end{equation}
which, combined with \eqref{e:KJ2013a}, yields
\begin{equation}
\label{e:KJ2013c}
\|R^{\varepsilon}y-\overline{x}\|\leq
\|R^{\varepsilon}y-Q^\varepsilon y\|+\|Q^\varepsilon y-\overline{x}\|
\leq\varepsilon^2\beta(\|y-\overline{x}\|+\rho)
+\gamma(\varepsilon)\|y-\overline{x}\|.
\end{equation}
From this estimate it follows that given 
$\delta'\in\left]0,\delta\right]$, for every
$\varepsilon\leq m\alpha\delta'/(\beta(\delta'+\rho)+2m^2\delta')$,
we have 
$R^{\varepsilon}(D\cap B(\overline{x};\delta'))\subset D\cap
B(\overline{x};\delta')$. Therefore $R^\varepsilon$ has a fixed 
point in $B(\overline{x};\delta')$ and hence
$d_{\Fix R^{\varepsilon}}(\overline{x})\leq\delta'$.
Since $\delta'$ can be arbitrarily small, this proves
that $d_{\Fix R^{\varepsilon}}(\overline{x})\to 0$ 
as $\varepsilon\to 0$.
\item
$x(t)\to\overline{x}$ as $t\to\pinf$:\\
Let $\theta\colon\RP\to\RP$ be defined by 
$\theta(t)=\|x(t)-\overline{x}\|^2/2$,
and let us show that $\lim_{t\to\pinf}\theta(t)=0$.
We note that this holds whenever the orbit enters the ball 
$B(\overline{x};\delta)$ at some instant $t_0$. Indeed, 
the monotonicity of $A$ implies that $\theta$ is decreasing 
so that, for every $t\in[t_0,\pinf[$, $x(t)\in D\cap
B(\overline{x};\delta)$ and hence \eqref{gradflow_T} and 
\eqref{M_T} give
\begin{equation}
\theta'(t)=\scal{x(t)-\overline{x}}{x'(t)}
=\scal{\overline{x}-x(t)}{x(t)-Tx(t)}\leq-\alpha\|x(t)-
\overline{x}\|^2=-2\alpha\theta(t).
\end{equation}
Consequently, $\theta(t)\leq\theta(t_0)\exp(-2\alpha(t-t_0))\to 0$ as
$t\to\pinf$. It remains to prove that $x(t)$ enters the ball 
$B(\overline{x};\delta)$. If this was 
not the case we would have 
$\mu=\lim_{t\to\pinf}\sqrt{\theta(t)}\geq\delta$. Choose $t_0$ large 
enough so that $\sqrt{\theta(t_0)}\leq\mu+{\delta}/{2}$ and let
$\tilde x$ be the solution to the Cauchy problem
\begin{equation}
\begin{cases}
\tilde{x}'(t)=-A\tilde{x}(t)\;\,\text{on}\;[t_0,\pinf[\\
\tilde{x}(t_0)=\tilde x_0,
\end{cases}
\end{equation}
where 
$\tilde x_0=\overline{x}+\delta(x(t_0)-\overline{x})/
\|x(t_0)-\overline{x}\|\in D\cap B(\overline{x};\delta)$.
By monotonicity of $A$, $t\mapsto\|x(t)-\tilde x(t)\|$ is
decreasing and hence
\begin{align}
(\forall t\in[t_0,\pinf[)\quad
\|x(t)-\overline{x}\|
&\leq\|x(t)-\tilde x(t)\|+\|\tilde x(t)-\overline{x}\| \nonumber\\
&\leq\|x(t_0)-\tilde x(t_0)\|+\|\tilde x(t)-\overline{x}\| \nonumber\\
&\leq(\mu-\delta/2)+\|\tilde x(t)-\overline{x}\|.
\end{align}
Since by the previous argument $\|\tilde x(t)-\overline{x}\|\to 0$, 
we reach a contradiction with the fact that 
$(\forall t\in\RP)$ $\|x(t)-\overline{x}\|\geq\mu$.
\end{itemize}
Altogether, the conclusion follows from Theorem~\ref{p33_T}.
\end{proof}

\begin{remark}
If $\Id-T$ were globally (rather than just locally
as in \eqref{M_T}) strongly monotone 
at every point in $\Fix T$, we could derive 
Theorem~\ref{p38_T} directly from Theorem~\ref{t:1} and
Remark~\ref{r:2013-04-23}\ref{r:2013-04-23ii}.
\end{remark}

Theorem~\ref{p38_T} can also be applied when the local strong
monotonicity or the local contraction properties hold up to an
affine subspace (see \eqref{paralelo_T} below). 
This is relevant in the case studied in
\cite{Baus05} when  $(T_i)_{i\in I}$ is a family of projection 
operators onto closed affine subspaces $(x_i+E_i)_{i\in I}$,
where $(E_i)_{i\in I}$ is a family of closed vector subspaces of
$\HH$, and more generally for unbounded closed convex cylinders of 
the form $(B_i+E_i)_{i\in I}$, where $B_i$ is a nonempty bounded
closed convex subset of $E_i^\perp$.

\begin{corollary}
\label{corf}
Suppose that Assumptions~\ref{h:1} and \ref{h:2} are satisfied,
that $D=\HH$, and that $(T_i)_{i\in I}$ is a family of projection 
operators onto nonempty closed convex subsets $(C_i)_{i\in I}$ of 
$\HH$. In addition, suppose that the set $S$ of minimizers of 
$\Phi$ in \eqref{e:1994} is a closed affine subspace, say 
$S=z+E$, where $z\in\HH$ and $E$ is a closed vector subspace of 
$\HH$. Let $y_0\in D$, set $\overline{x}=P_Sy_0$, and, for every 
$\varepsilon\in\zeroeta$, let $(x_i^\varepsilon)_{i\in I}$ be the 
cycle obtained as the weak limit of \eqref{e:2012-04-09a} in 
Proposition~\ref{p:2013-04-22}. Then the following hold.
\begin{enumerate}
\item
\label{corfi}
$(\forall i\in I)$ $x_i^\varepsilon\weakly\overline{x}$ as
$\varepsilon\to 0$.
\item
\label{corfii}
Suppose that
\begin{equation}
\label{paralelo_T}
(\forall y\in S)(\exi\rho\in [0,1[)(\exi\delta\in\RPP)
(\forall x\in B(0;\delta)\cap E^\perp)\;\;
\|T(x+y)-Ty\|\leq\rho\|x\|.
\end{equation}
Then $(\forall i\in I)$ $x_i^\varepsilon\to\overline{x}$ as
$\varepsilon\to 0$.
\end{enumerate}
\end{corollary}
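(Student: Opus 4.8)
The plan is to reduce everything to the gradient-flow picture and to exploit the lineality of $\Phi$ along $E$. Since each $T_i=P_i$ is firmly nonexpansive, $A=\Id-T=\nabla\Phi$ with $\Phi$ as in \eqref{e:1994}, so \eqref{gradflow_T} coincides with the gradient flow \eqref{gradflow}. First I would record the structural fact that, because $S=\Argmin\Phi=z+E$ contains the whole line $z+\RR v$ for every $v\in E$, the recession function of $\Phi$ vanishes in the directions $\pm v$; hence $v$ is a direction of constancy of $\Phi$ and $\nabla\Phi(x)\in E^\perp$ for every $x$. Consequently $\frac{d}{dt}P_Sx(t)=P_E x'(t)=-P_E\nabla\Phi(x(t))=0$, so $P_Sx(t)\equiv P_Sy_0=\overline{x}$ for all $t$. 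By \cite[Theorem~4]{Bruc75} the orbit converges weakly to some $x_\infty\in S$, and since $P_S$ is weakly continuous we get $x_\infty=P_Sx_\infty=\overline{x}$. Thus the flow always satisfies $x(t)\weakly\overline{x}$.

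For part \ref{corfi} I would first invoke Theorem~\ref{t:1}: under Assumptions~\ref{h:1}--\ref{h:2} the family $(x_1^\varepsilon,\dots,x_m^\varepsilon)$ is bounded, satisfies the asymptotic regularity \eqref{e:2013-03-24}, and each of its weak sequential cluster points has the form $(x,\dots,x)$ with $x\in S=z+E$. It therefore remains only to show that the cluster point is unique and equals $\overline{x}$; since cluster points already lie in $z+E$, it suffices to pin the tangential component, i.e. to prove $\scal{x_m^\varepsilon}{v}\to\scal{\overline{x}}{v}=\scal{y_0}{v}$ for every $v\in E$. Here I would combine two ingredients: the conservation $\scal{x(t)}{v}\equiv\scal{y_0}{v}$ of the flow (again because $\nabla\Phi\perp E$), and Proposition~\ref{sd_T}, which makes the interpolant $\psi^\varepsilon$ track $x(\cdot)$ on every compact horizon so that $\scal{z_k^\varepsilon}{v}\approx\scal{y_0}{v}$ for $k\lesssim\overline t/(m\varepsilon)$. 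The increments satisfy $\scal{z_{k+1}^\varepsilon-z_k^\varepsilon}{v}=\scal{R^\varepsilon z_k^\varepsilon-z_k^\varepsilon}{v}=\varepsilon^2\scal{E_m^\varepsilon z_k^\varepsilon}{v}$ by Lemma~\ref{lema31_T} (the first-order term is killed by $\scal{Az_k^\varepsilon}{v}=0$), and one controls the infinite tail $k\to\pinf$ using the Fej\'er monotonicity of $(z_k^\varepsilon)_{k\in\NN}$ with respect to $\Fix R^\varepsilon$ together with $x_m^\varepsilon\in\Fix R^\varepsilon$.

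For part \ref{corfii} the additional hypothesis \eqref{paralelo_T} is exactly a local strict contraction of $T$ toward $S$ in the transverse directions $E^\perp$, which upgrades the previous argument to strong convergence. I would proceed in two steps and then apply Theorem~\ref{p33_T}. First, strong convergence of the orbit: since the $E$-component of $x(t)$ is frozen and the $E^\perp$-component obeys a flow that is locally strongly monotone around $S$ by \eqref{paralelo_T}, the Lyapunov argument of Theorem~\ref{p38_T} (applied to $t\mapsto\tfrac12\|x(t)-\overline{x}\|^2$, whose decay is driven only by the transverse part) gives $x(t)\to\overline{x}$ strongly. Second, the stability \eqref{e:stab}: writing $Q^\varepsilon=\Id-m\varepsilon A$ and using that $Q^\varepsilon$ preserves the $E$-component while contracting the $E^\perp$-component with factor $\gamma(\varepsilon)<1$, one combines the estimate $\|R^\varepsilon y-Q^\varepsilon y\|\le\varepsilon^2\beta(\|y-\overline{x}\|+\rho)$ of Lemma~\ref{lema31_T} with \eqref{paralelo_T} --- which holds at every point of $S$ --- to produce, for each $z\in S$, a fixed point of $R^\varepsilon$ in a shrinking neighborhood of $z$, as in the ball-invariance step of Theorem~\ref{p38_T}. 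This yields $d_{\Fix R^\varepsilon}(\overline{x})\to0$, and Theorem~\ref{p33_T} then gives $x_i^\varepsilon\to\overline{x}$ for every $i\in I$.

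The hard part is the behavior in the directions tangent to $S$. In the transverse space $E^\perp$ one controls the iterates (via strong monotonicity in (ii), resp. via the demiclosed principle underlying Theorem~\ref{t:1} in (i)), but along $E$ the map $R^\varepsilon$ is only approximately invariant: its per-step tangential displacement is $O(\varepsilon^2)$ yet it must be summed over the $\Theta(1/\varepsilon)$ iterations needed for convergence, so naive estimates only give an $O(1)$ drift. The affine structure --- equivalently $\nabla\Phi\perp E$, so that both the semigroup and the leading order of $R^\varepsilon$ preserve $P_S$ exactly --- is precisely what anchors the limit to $P_Sy_0$; turning this into a rigorous bound, i.e. reconciling the finite-horizon approximation of Proposition~\ref{sd_T} with the $k\to\pinf$ limit that defines $x_m^\varepsilon$, is the crux of both parts.
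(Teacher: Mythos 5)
Your reduction to the flow and your identification of the tangential directions as the difficulty are on target, but the proposal stops short of the one observation that dissolves that difficulty, and the workaround you sketch does not close. The missing fact is a \emph{per-operator} invariance, not just the averaged one you derive. Since $S=\operatorname{Argmin}\Phi=z+E$ and each $d_{C_i}^2$ is convex and nonnegative, the constancy of the sum $\Phi$ along $z+\RR v$ ($v\in E$) forces \emph{each} $d_{C_i}^2$ to be affine, hence constant, along that line, and therefore constant in the direction $v$ everywhere; this gives $C_i+E\subset C_i$ and $P_{C_i}x-x\in E^\perp$ for every $x\in\HH$ and every $i\in I$. Consequently every elementary step of \eqref{e:2012-04-09a} displaces \emph{exactly} along $E^\perp$: the whole sequence $(y_k^\varepsilon)_{k\in\NN}$ remains in the weakly closed affine set $y_0+E^\perp$, and so do its weak limits $x_i^\varepsilon$ and the orbit $x(\cdot)$. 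In that slice $\Fix T\cap(y_0+E^\perp)=S\cap(y_0+E^\perp)=\{\overline{x}\}$ is a singleton, so \ref{corfi} follows at once from Theorem~\ref{t:1}\ref{t:1iv} applied in $y_0+E^\perp$, and \ref{corfii} from Theorem~\ref{p38_T} in the same space, \eqref{paralelo_T} being precisely the local strict contraction (hence local strong monotonicity of $\Id-T$) at the unique fixed point there. This is the paper's proof, and it is two lines once the invariance is noted.

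You only establish the averaged statement $\nabla\Phi(x)\in E^\perp$, which kills the first-order tangential term of $R^\varepsilon$ but leaves an $O(\varepsilon^2)$ tangential residual per cycle. The tail control you propose does not work: Fej\'er monotonicity gives that $\|z_k^\varepsilon-x_m^\varepsilon\|$ is nonincreasing, hence \emph{bounded}, but never small, so it cannot convert the finite-horizon estimate $\scal{z_k^\varepsilon}{v}\approx\scal{y_0}{v}$ for $k\lesssim\bar t/(m\varepsilon)$ into $\scal{x_m^\varepsilon}{v}=\scal{y_0}{v}$; the sum of the $O(\varepsilon^2)$ tangential increments over the infinitely many iterations defining $x_m^\varepsilon$ is simply not controlled. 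You correctly label this ``the crux,'' but the proposal leaves it open, so part \ref{corfi} is not proved. The same issue resurfaces in your route to \ref{corfii}: the ball-invariance step needed for \eqref{e:stab} fails in the full space because $T$ is an isometry along $E$ near $S$ (take $y=z+u$ with $u\in E$, $\|u\|=\delta'$: then $Q^\varepsilon y=y$ and the $\varepsilon^2$ perturbation of Lemma~\ref{lema31_T} pushes $R^\varepsilon y$ outside $B(z;\delta')$), so one must again work inside $z+E^\perp$, which requires the per-operator invariance. With that invariance added, both of your arguments become unnecessary and the corollary follows from the theorems already proved.
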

\begin{proof}
Let $i\in I$. Since $S=z+E$, we have $C_i+E\subset C_i$ and
the iterates $(y_{k}^\varepsilon)_{k\in\NN}$ in 
\eqref{e:2012-04-09a} move parallel to $E^\perp$ and remain in
$y_0+E^\perp$.
Hence, since $\{\overline{x}\}=S\cap (y_0+E^\perp)$, 
\ref{corfi} follows by applying Theorem~\ref{t:1} in the space 
$y_0+E^\perp$, while \ref{corfii} follows by applying 
Theorem~\ref{p38_T} in this same space.
\end{proof}

We conclude the paper by revisiting De Pierro's conjecture in the
affine setting investigated in \cite{Baus05}. More precisely, we
shall derive an alternative proof of the main result of
\cite{Baus05} from Corollary~\ref{corf}. For this purpose, we need
the following notion of regularity.

\begin{definition}
A finite family $(E_i)_{i\in I}$ of closed vector subspaces of $\HH$ 
with intersection $E$ is regular if 
\begin{equation}
\big(\forall (y_k)_{n\in\NN}\in\HH^{\NN}\big)\quad
\max_{i\in I}d_{E_i}(y_k)\to 0\quad\Rightarrow\quad d_E(y_k)\to 0.
\end{equation}
\end{definition}

\begin{theorem} 
Let $(E_i)_{i\in I}$ be a regular family of closed vector subspaces of 
$\HH$ with intersection $E$ and for, every $i\in I$, let 
$\overline{x}_i\in\HH$ and let $P_i$ be the projection operator onto 
the affine subspace $C_i=\overline{x}_i+E_i$. Let $y_0\in\HH$ and set 
$S=\operatorname{Argmin}\sum_{i\in I}d_{C_i}^2$. Then there exists
$z\in\HH$ such that $S=z+E$. Moreover, for every 
$\varepsilon\in\rzeroun$, the cycle $(x_i^\varepsilon)_{i\in I}$
obtained as the weak limit of \eqref{e:2012-04-09a} in 
Proposition~\ref{p:2013-04-22} exists, and $(\forall i\in I)$
$x_i^\varepsilon\to P_Sy_0$ as $\varepsilon\to 0$.
\end{theorem}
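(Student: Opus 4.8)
The plan is to deduce the result from Corollary~\ref{corf}\ref{corfii}, so the whole proof reduces to checking its hypotheses: that $S=\Argmin\Phi$ is a closed affine subspace $z+E$, that Assumption~\ref{h:2} holds, and that the contraction condition \eqref{paralelo_T} is satisfied. The common thread is a quantitative reformulation of regularity. Writing $P_ix=\overline{x}_i+P_{E_i}(x-\overline{x}_i)$, so that $x-P_ix=P_{E_i^\perp}(x-\overline{x}_i)$, the function $\Phi$ of \eqref{e:1994} becomes the convex quadratic $x\mapsto\frac1{2m}\sum_{i\in I}\|P_{E_i^\perp}(x-\overline{x}_i)\|^2$, with $\nabla\Phi=\Id-T=A$ and constant Hessian $H=\frac1m\sum_{i\in I}P_{E_i^\perp}$ satisfying $\ker H=\bigcap_{i\in I}E_i=E$. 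First I would note that, since $d_{E_i}$ and $d_E$ are positively homogeneous, a normalization argument turns the sequential implication defining regularity into the bounded estimate $d_E(x)\leq\kappa\max_{i\in I}d_{E_i}(x)$; as $d_{E_i}(x)=\|P_{E_i^\perp}x\|$ and $d_E(x)=\|x\|$ on $E^\perp$, this is equivalent to the coercivity $\scal{x}{Hx}\geq\alpha_0\|x\|^2$ on $E^\perp$ for some $\alpha_0\in\RPP$. Thus regularity is exactly the statement that $H$ is bounded below on $E^\perp=(\ker H)^\perp$.

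Second, I would establish the affine structure of $S$. Because $E\subseteq E_i$ gives $P_{E_i^\perp}e=0$ for $e\in E$, the function $\Phi$ is invariant under translations by $E$, so it suffices to minimize it over the Hilbert space $E^\perp$. There the bound on $H$ yields $\max_{i\in I}d_{E_i}(x)\geq\sqrt{\alpha_0}\,\|x\|$, whence $\big(\sum_{i\in I}d_{C_i}^2(x)\big)^{1/2}\geq\max_{i\in I}d_{C_i}(x)\geq\sqrt{\alpha_0}\,\|x\|-\max_{i\in I}\|\overline{x}_i\|$, so $\Phi$ is coercive on $E^\perp$. Being convex and continuous, it attains its infimum there, so $S\neq\emp$; and since $\Phi$ is quadratic with $\ker H=E$, the minimizer set is the translate of $E$ through any minimizer, i.e.\ $S=z+E$. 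This proves the first assertion and supplies the affine hypothesis of Corollary~\ref{corf}.

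Third, I would verify \eqref{paralelo_T} and Assumption~\ref{h:2}. For \eqref{paralelo_T}, note that $T$ is affine with self-adjoint linear part $L=\Id-H$ leaving $E^\perp$ invariant; the spectrum of $H|_{E^\perp}$ lies in $[\alpha_0,1]$, so that of $L|_{E^\perp}$ lies in $[0,1-\alpha_0]$ and $\|L|_{E^\perp}\|\leq 1-\alpha_0<1$. Since $T(x+y)-Ty=Lx$ for $y\in S$, \eqref{paralelo_T} holds globally with $\rho=1-\alpha_0$. For Assumption~\ref{h:2}, I would exploit that $R^\varepsilon$ is affine, say $R^\varepsilon=M_\varepsilon+c_\varepsilon$ with $M_\varepsilon$ the identity on $E$ and $c_\varepsilon\in E^\perp$, so on $E^\perp$ its fixed points solve $(\Id-M_\varepsilon)x=c_\varepsilon$. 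Separating the linear and constant parts in Lemma~\ref{lema31_T} gives, on $E^\perp$, $\Id-M_\varepsilon=\varepsilon(mH-\varepsilon N_\varepsilon)$ with $\sup_{\varepsilon}\|N_\varepsilon\|<\infty$ and $\|c_\varepsilon\|=O(\varepsilon)$; as $mH\geq m\alpha_0\Id$ on $E^\perp$, a Neumann-series estimate gives $\|(\Id-M_\varepsilon|_{E^\perp})^{-1}\|=O(1/\varepsilon)$ for $\varepsilon$ small. Hence the fixed point $z^\varepsilon=(\Id-M_\varepsilon|_{E^\perp})^{-1}c_\varepsilon$ remains bounded as $\varepsilon\to 0$, which is precisely \eqref{e:H}. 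With all hypotheses in force, Corollary~\ref{corf}\ref{corfii} applies and gives $x_i^\varepsilon\to P_Sy_0$ for every $i\in I$.

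The hard part will be the first step, namely the quantitative passage from the qualitative regularity of $(E_i)_{i\in I}$ to the uniform lower bound $\scal{x}{Hx}\geq\alpha_0\|x\|^2$ on $E^\perp$; once this is available, coercivity (hence $S=z+E$), the contraction rate $\rho<1$, and the uniform boundedness of the cycles all follow mechanically. A secondary technical point is the careful bookkeeping of the affine error operator $E_m^\varepsilon$ in Lemma~\ref{lema31_T}: it is the affineness of $R^\varepsilon$ in this setting that lets one split an $O(1/\varepsilon)$ bound on $(\Id-M_\varepsilon)^{-1}$ from an $O(\varepsilon)$ bound on the data $c_\varepsilon$, so that their product stays bounded and certifies Assumption~\ref{h:2}.
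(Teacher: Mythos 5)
Your proof follows the same skeleton as the paper's: everything is funneled through Corollary~\ref{corf}\ref{corfii}, and the work consists in checking that $S$ is a translate of $E$, that \eqref{paralelo_T} holds, and that the cycles exist (Assumption~\ref{h:2}). The difference is in how these are discharged. The paper cites Bauschke--Edwards twice: regularity of $(E_i)_{i\in I}$ is equivalent to $\|L\circ P_{E^\perp}\|<1$ with $L=\frac1m\sum_{i\in I}P_{E_i}$, which makes $T$ a strict contraction on $y_0+E^\perp$ and yields $S=z+E$ and \eqref{paralelo_T} in one stroke; and $R^\varepsilon|_{y_0+E^\perp}$ is a strict contraction for \emph{every} $\varepsilon\in\rzeroun$, which gives existence of the cycles. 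You replace both citations by self-contained arguments: the homogeneity/normalization passage from sequential regularity to the bound $d_E\leq\kappa\max_{i\in I}d_{E_i}$, hence $H=\frac1m\sum_{i\in I}P_{E_i^\perp}\geq\alpha_0\Id$ on $E^\perp$, is standard and correct (and is essentially the content of the cited Theorem~5.4 of \cite{Baus05}); and your verification of Assumption~\ref{h:2} via Lemma~\ref{lema31_T} is sound --- since $R^\varepsilon$ is affine, evaluating the bound \eqref{ReA} at $z$ and at $z+tu$ and letting $t\to\pinf$ does yield $\|M_\varepsilon-\Id+\varepsilon mH\|=O(\varepsilon^2)$ and $\|c_\varepsilon\|=O(\varepsilon)$, after which the Neumann-series inversion on $E^\perp$ gives a bounded family of fixed points. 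This buys self-containedness at the price of redoing, in quantitative form, what the paper delegates to \cite{Baus05}.

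The one place where your argument falls short of the statement as written: the Neumann-series inversion of $mH-\varepsilon N_\varepsilon$ only works for $\varepsilon$ below a threshold, so you obtain existence of the cycle only for $\varepsilon$ small. That suffices for Assumption~\ref{h:2} and for the limit as $\varepsilon\to 0$, but the theorem asserts existence for every $\varepsilon\in\rzeroun$. The paper gets the full range by observing that the linear part $M_\varepsilon$ of $R^\varepsilon$, restricted to $E^\perp$, is a convex combination of nonexpansive linear operators in which the term $L|_{E^\perp}$ (a strict contraction, by regularity) carries positive weight, so that $\|M_\varepsilon|_{E^\perp}\|<1$ for all $\varepsilon$; you could recover this by expanding the product $\prod_{i\in I}\big((1-\varepsilon)\Id+\varepsilon P_{E_i}\big)$ directly instead of working through the $O(\varepsilon^2)$ estimate.
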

\begin{proof} 
We have $(\forall i\in I)$ 
$P_i\colon x\mapsto\overline{x}_i+P_{E_i}(x-\overline{x}_i)$.
Hence $Tx=a+Lx$, where $a=(1/m)\sum_{i\in I}(\overline{x}_i-
P_{E_i}\overline{x}_i)$ and $L=(1/m)\sum_{i\in I}P_{E_i}$. 
According to \cite[Theorem~5.4]{Baus05}, the subspaces 
$(E_i)_{i\in I}$ are regular if and only if 
$\rho=\|L\circ P_{E^\perp}\|<1$, which implies that $T$ is a
strict contraction on $y_0+E^\perp$. From this we
get simultaneously that $T$ has a fixed point $z$, that the
least-squares solution set is of the form $S=z+E$, and that
\eqref{paralelo_T} holds. Hence, the result will follow from
Corollary~\ref{corf} provided that $(x_i^\varepsilon)_{i\in I}$ 
exists for every $\varepsilon\in\rzeroun$.
This was proved in \cite[Theorem~5.6]{Baus05} by noting that
$R^\varepsilon|_{y_0+E^\perp}$ is a strict contraction. Indeed,
$R^\varepsilon$ is a composition of affine maps and an inductive
calculation reveals that it can be written as $R^\varepsilon
x=a^\varepsilon+L^\varepsilon x$, where $a^\varepsilon\in\HH$ and
$L^\varepsilon$ a linear operator which is a convex combination of
nonexpansive linear maps, one of which is the strict contraction 
$L\circ P_{E^\perp}$.
\end{proof}

\begin{remark}
Corollary~\ref{corf}\ref{corfi} seems to be new even for affine 
subspaces $(C_i)_{i\in I}$. Also new in 
Corollary~\ref{corf}\ref{corfii} 
is the fact that strong convergence holds for more general convex 
sets than just translates of regular subspaces.
\end{remark}

\vspace{2ex}
\noindent
{\bf Acknowledgement.}
The research of P. L. Combettes was supported in part by
the European Union under the 7th Framework Programme
``FP7-PEOPLE-2010-ITN'', grant agreement number
264735-SADCO.
The research of R. Cominetti was supported by Fondecyt 1130564 and 
N\'ucleo Milenio Informaci\'on y Coordinaci\'on en Redes 
ICM/FIC P10-024F.

\end{document}